\documentclass[12pt]{amsart}

\usepackage{amssymb, amsfonts}
\usepackage{url}
\usepackage[all,arc]{xy}
\usepackage{amscd}
\usepackage{mathrsfs}
\usepackage{geometry}
\usepackage[colorlinks,citecolor=blue]{hyperref}
\usepackage{comment}
\usepackage{enumerate}
\usepackage{graphicx}
\usepackage{bm}
\usepackage{color}
\usepackage{anyfontsize}
\usepackage{caption}

\numberwithin{equation}{section}

\theoremstyle{plain}
\newtheorem{theorem}{Theorem}[section]
\newtheorem{corollary}[theorem]{Corollary}
\newtheorem{lemma}[theorem]{Lemma}
\newtheorem{proposition}[theorem]{Proposition}

\theoremstyle{remark}
\newtheorem{remark}{Remark}[section]
\newtheorem{example}[remark]{Example}

\begin{document}


\title[On static manifolds with boundary]{On static manifolds with boundary admitting a nowhere-vanishing static potential}

\author{Vladimir Medvedev}

\address{Faculty of Mathematics, National Research University Higher School of Economics, 6 Usacheva Street, Moscow, 119048, Russian Federation}

\email{vomedvedev@hse.ru}



\begin{abstract}
We study complete static manifolds with boundary admitting a nowhere-vanishing static potential. Our main result shows that, under a natural lower bound relating the scalar curvature and the boundary mean curvature, a simple static manifold with boundary must in fact have positive scalar curvature, negative boundary mean curvature, and be compact; we also obtain explicit relations and estimates involving the volume of the manifold and the geometry of its boundary. In the scalar-flat case, we prove global splitting and Ricci-flat rigidity results, including for disconnected boundary, while in the negative scalar curvature case we establish a sharp mean-curvature bound and characterize the equality case by an exponential warped-product structure. The proofs rely essentially on the study of the associated Einstein manifold. In appendix we derive several identities for static manifolds with boundary and discuss the associated Einstein manifold technique in the boundaryless setting.
\end{abstract}

\maketitle


\newcommand\cont{\operatorname{cont}}
\newcommand\diff{\operatorname{diff}}

\newcommand{\dvol}{\text{dA}}
\newcommand{\Ric}{\operatorname{Ric}}
\newcommand{\Hess}{\operatorname{Hess}}
\newcommand{\GL}{\operatorname{GL}}
\newcommand{\myO}{\operatorname{O}}
\newcommand{\myP}{\operatorname{P}}
\newcommand{\eye}{\operatorname{Id}}
\newcommand{\myF}{\operatorname{F}}
\newcommand{\Vol}{\operatorname{Vol}}
\newcommand{\odd}{\operatorname{odd}}
\newcommand{\even}{\operatorname{even}}
\newcommand{\ol}{\overline}
\newcommand{\mye}{\operatorname{E}}
\newcommand{\myo}{\operatorname{o}}
\newcommand{\myt}{\operatorname{t}}
\newcommand{\irr}{\operatorname{Irr}}
\newcommand{\mydiv}{\operatorname{div}}
\newcommand{\curl}{\operatorname{curl}}
\newcommand{\re}{\operatorname{Re}}
\newcommand{\im}{\operatorname{Im}}
\newcommand{\can}{\operatorname{can}}
\newcommand{\scal}{\operatorname{scal}}
\newcommand{\tr}{\operatorname{trace}}
\newcommand{\sgn}{\operatorname{sgn}}
\newcommand{\SL}{\operatorname{SL}}
\newcommand{\myspan}{\operatorname{span}}
\newcommand{\mydet}{\operatorname{det}}
\newcommand{\SO}{\operatorname{SO}}
\newcommand{\SU}{\operatorname{SU}}
\newcommand{\specl}{\operatorname{spec_{\mathcal{L}}}}
\newcommand{\fix}{\operatorname{Fix}}
\newcommand{\id}{\operatorname{id}}
\newcommand{\grad}{\operatorname{grad}}
\newcommand{\singsup}{\operatorname{singsupp}}
\newcommand{\wave}{\operatorname{wave}}
\newcommand{\ind}{\operatorname{ind}}
\newcommand{\mynull}{\operatorname{null}}
\newcommand{\inj}{\operatorname{inj}}
\newcommand{\arcsinh}{\operatorname{arcsinh}}
\newcommand{\Spec}{\operatorname{Spec}}
\newcommand{\Ind}{\operatorname{Ind}}
\newcommand{\Nul}{\operatorname{Nul}}
\newcommand{\inrad}{\operatorname{inrad}}
\newcommand{\mult}{\operatorname{mult}}
\newcommand{\Length}{\operatorname{Length}}
\newcommand{\Area}{\operatorname{Area}}
\newcommand{\Ker}{\operatorname{Ker}}
\newcommand{\floor}[1]{\left \lfloor #1  \right \rfloor}

\newcommand\restr[2]{{
  \left.\kern-\nulldelimiterspace 
  #1 
  \vphantom{\big|} 
  \right|_{#2} 
  }}


\section{Introduction and main results}

A \textit{static manifold} is a Riemannian manifold $(M^n,g)$, $n\geqslant 3$, admitting a nontrivial smooth function $V$, called a \textit{static potential}, which satisfies the \textit{static equation}
\begin{align}\label{static}
\operatorname{Hess}_gV-(\Delta_gV)g-V\operatorname{Ric}_g=0.
\end{align}
Taking the trace of this equation gives
\begin{align}\label{traced}
\Delta_gV=-\frac{R_g}{n-1}V.
\end{align}

A standard consequence of the static equation is that the scalar curvature $R_g$ is constant on every connected static manifold (see, e.g.,~\cite{fischer1975deformations,tod2000spatial,corvino2000scalar}). Thus \eqref{traced} implies that $V$ is a Laplace eigenfunction with eigenvalue $-\dfrac{R_g}{n-1}$.

A \textit{static manifold with boundary} is a triple $(M^n,g,V)$ satisfying the static equation~\eqref{static} in the interior together with the boundary condition
\begin{align}\label{staticb}
\partial_\nu V\,g_{\partial M}-VB_g=0\qquad\text{on }\partial M,
\end{align}
where $\nu$ is the outward unit normal, $g_{\partial M}$ is the induced metric, and $B_g$ is the second fundamental form of the boundary. \textit{In this paper we also assume that $(M,g)$ is complete up to the boundary. }

If $H_g:=\operatorname{tr}_{g_{\partial M}}B_g$ denotes the mean curvature, then tracing the boundary equation~\eqref{staticb} yields the \textit{Robin condition}
\begin{align}\label{tracedb}
\partial_\nu V=\frac{H_g}{n-1}V\qquad\text{on }\partial M.
\end{align}
As it is proved in~\cite[Proposition 1]{cruz2023static}, at every boundary point at which $V\neq 0$, the boundary is totally umbilical, i.e., $B_g=\dfrac{H_g}{n-1}g_{\partial M}$ Moreover, $H_g$ is constant on each connected component of the boundary. Thus if $(M,g,V)$ is a static manifold with boundary, then $V$ is a \textit{Robin eigenfunction}, i.e. it satisfies \eqref{traced} in $M$ and \eqref{tracedb} on $\partial M$. 

In this paper we are mostly interested in static manifolds with boundary. They are studied recently in \cite{cruz2019prescribing,ho2020deformation,huang2022scalar,cruz2023critical,cruz2023static,sheng2024localized,medvedev2024static,sheng2025static,medvedev2025some,sheng2026obata,medvedev2026dimension} (see also references therein).

In order to state our first result in this paper, we recall the definition of \textit{simple static manifolds with boundary}, which was introduced in~\cite{sheng2025static,sheng2026obata}: $(M,g,V)$ is a simple static manifold with boundary if $M$ and its boundary are connected. 

\begin{theorem}\label{thm_main}
Let $(M^n,g,V)$, $n\geqslant3$, be a simple static manifold with boundary. If $n>3$ we additionally assume that $\partial M$ is compact. Suppose that $V$ does not vanish on $M$ and $R_g>-\dfrac{n}{n-1}H_g^2$. Then $R_g>0$, $H_g<0$, $(M,g)$ is compact, and
\[
\operatorname{Vol}_g(\partial M)=\frac{\displaystyle\int_{\partial M}R_{\partial M}\,d\sigma_g-2\int_{\partial M}|\nabla^{\partial M}\log V|_g^2\,d\sigma_g}{R_g+\dfrac{nR_g^2}{n-1}\left(\frac{\displaystyle\int_MV\,d\mu_g}{\displaystyle\int_{\partial M}V\,d\sigma_g}\right)^2}.
\]
Moreover,
\[
\operatorname{Vol}_g(M)<-\frac{H_g}{R_g}\operatorname{Vol}_g(\partial M)=\frac{\displaystyle\int_MV\,d\mu_g}{\displaystyle\int_{\partial M}V\,d\sigma_g}\operatorname{Vol}_g(\partial M).
\]
\end{theorem}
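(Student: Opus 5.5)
The plan is to pass to the associated Einstein manifold, where the hypotheses become geometric comparison conditions, and then to obtain the volume identity by integrating the traced equations and the Gauss equation on $\partial M$.

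Since $V$ does not vanish and $M$ is connected, we may assume $V>0$. Consider the warped product $N=M\times S^1$ with metric $\hat g=g+V^2dt^2$. The standard warped product formulas give $\Ric_{\hat g}(X,Y)=\Ric_g(X,Y)-V^{-1}\Hess_gV(X,Y)$ and $\Ric_{\hat g}(\partial_t,\partial_t)=-V\Delta_gV$. Combining these with \eqref{static} and \eqref{traced} shows that $\hat g$ is Einstein: $\Ric_{\hat g}=\lambda\hat g$ with $\lambda=R_g/(n-1)$.

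For the boundary $\partial N=\partial M\times S^1$, its second fundamental form is $B_g$ in the $\partial M$ directions and $V\partial_\nu V\,dt^2$ in the $t$ direction. By \eqref{staticb} and \eqref{tracedb}, $\partial N$ is totally umbilic with constant umbilicity factor $h=H_g/(n-1)$. The hypothesis $R_g>-\frac{n}{n-1}H_g^2$ is then exactly $\lambda+nh^2>0$.

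First I integrate \eqref{traced} and use \eqref{tracedb}:
\[
-\frac{R_g}{n-1}\int_MV\,d\mu_g=\frac{H_g}{n-1}\int_{\partial M}V\,d\sigma_g .
\]
Hence $R_g$ and $H_g$ have opposite signs, or both vanish. If both vanish, the hypothesis fails, so $R_g\neq0$ and
\[
\frac{\int_MV}{\int_{\partial M}V}=-\frac{H_g}{R_g}.
\]
This gives the equality in the last displayed formula.

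Next I compute $\Ric_g(\nu,\nu)$ on $\partial M$. On $\partial M$ we have $\Delta_gV=\Delta_{\partial M}V+H_g\partial_\nu V+\Hess_gV(\nu,\nu)$, and the static equation gives $V\Ric_g(\nu,\nu)=\Hess_gV(\nu,\nu)-\Delta_gV$. Together these yield
\[
\Ric_g(\nu,\nu)=-\frac{\Delta_{\partial M}V}{V}-\frac{H_g^2}{n-1}.
\]
Since $\partial M$ is umbilic, the Gauss equation then becomes
\[
R_{\partial M}=R_g+\frac{nH_g^2}{n-1}+2\frac{\Delta_{\partial M}V}{V}.
\]
Once $\partial M$ is known to be compact, I integrate this over $\partial M$. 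Using $\int V^{-1}\Delta_{\partial M} V=\int|\nabla^{\partial M}\log V|^2$ and substituting $H_g=-R_g\int_MV/\int_{\partial M}V$, I get the stated formula for $\operatorname{Vol}_g(\partial M)$. It also shows that $\int_{\partial M}R_{\partial M}>0$.

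The geometric heart of the proof is compactness together with the signs $R_g>0$ and $H_g<0$. In $N$, I run Riccati comparison along the inward normal geodesics from $\partial N$. Since $\Ric_{\hat g}=\lambda\hat g$ and $\partial N$ is umbilic with factor $h$, the mean curvature of parallel hypersurfaces is dominated by that of the model with constant curvature $\lambda/n$ and umbilic boundary. In that model, focal points occur at finite distance precisely when $\lambda>0$, or when $h<0$ (with respect to the outward normal) and $nh^2+\lambda>0$.

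I would first rule out the case $R_g<0$, $H_g>0$ (which is forced if $R_g<0$). There, a point of $\partial M$ would have to be approached by normal geodesics with no focal points, and a Heintze--Karcher type volume inequality in $N$ comparing $\int_MV$ with $\int_{\partial M}V$ should contradict the identity above. This establishes $R_g>0$ and $H_g<0$.

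Then every point lies within a uniformly bounded distance of $\partial M$. For $n>3$, compactness of $\partial M$ is assumed. For $n=3$, I would derive it from the boundary geometry: $\partial M$ is a complete surface whose curvature, by the Gauss equation above, is bounded below by a positive constant after a conformal modification by $V$, and a Myers-type argument then applies. Hence $M$ is compact.

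For the strict inequality $\operatorname{Vol}_g(M)<-\frac{H_g}{R_g}\operatorname{Vol}_g(\partial M)$, I would apply the Heintze--Karcher inequality in $M$ or in $N$ with the constant mean curvature boundary. Strictness should come from the rigidity case, which forces a model that is incompatible with $V>0$ and $R_g>0$. The compactness step, and in particular extracting the sign of $R_g$ from the comparison argument, is where I expect the main difficulty.
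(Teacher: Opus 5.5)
\textbf{The main gap is circular reasoning about compactness, and it cannot be repaired.} Your only tool for the signs is $-R_g\int_MV\,d\mu_g=H_g\int_{\partial M}V\,d\sigma_g$. This comes from the divergence theorem, so it presupposes that $M$ is compact; for noncompact $M$ the integral $\int_MV\,d\mu_g$ need not even be finite. You use it to exclude $R_g=0$ and to argue that $R_g<0$ forces $H_g>0$, and only then derive compactness from $R_g>0$.

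Your focal criterion also has the sign of $h$ reversed. With the outward normal, geodesic balls have $h>0$. The model focuses at finite distance iff $\lambda>0$, or $h>0$ and $\lambda+nh^2>0$. So for compact $\partial M$, comparison in $N$ does give compactness whenever $R_g>0$ or $H_g>0$. In the remaining case $R_g\leqslant 0$, $H_g<0$ there is no focusing and no identity to use.

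That case really occurs. Take $M=\{r\geqslant 3m\}$ in Schwarzschild, with $g=(1-\tfrac{2m}{r})^{-1}dr^2+r^2g_{S^2}$ and $V=\sqrt{1-2m/r}$. Then $\Hess_gV=V\Ric_g$ and $R_g=0$. At the photon sphere $r=3m$ one has $V=\tfrac{1}{\sqrt3}$, $B_g=-\tfrac{1}{3\sqrt3\,m}g_{\partial M}$ and $\partial_\nu V=-\tfrac{1}{9m}$, so $\partial_\nu V\,g_{\partial M}=VB_g$ and \eqref{staticb} holds. Moreover $V>0$, $M$ is simple, and $R_g=0>-\tfrac32H_g^2$. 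Yet $M$ is noncompact and $R_g=0$. The photon sphere $r=3m$ of AdS--Schwarzschild, with $f=1+r^2-2m/r$, does the same with $R_g=-6$.

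The paper's proof has the same ordering: it applies \eqref{static_ident} and \eqref{med} before Lemma~\ref{prop:positive-scalar-compactness}. So the statement is false as written unless $M$ is assumed compact. With that assumption, \eqref{med} and \eqref{static_ident} give $\int_MV|\mathring{\Ric}_g|_g^2\,d\mu_g=\tfrac{R_g}{n(n-1)}\bigl(R_g+\tfrac{n}{n-1}H_g^2\bigr)\int_MV\,d\mu_g$, and $R_g>0$ follows in one line.

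Your Heintze--Karcher route for excluding $R_g<0<H_g$ can be completed. Set $\kappa^2=-\tfrac{R_g}{n(n-1)}$ and $h=\tfrac{H_g}{n-1}>\kappa$. Then it yields $\int_MV\,d\mu_g\big/\int_{\partial M}V\,d\sigma_g\leqslant\int_0^{t_0}\bigl(\cosh\kappa t-\tfrac{h}{\kappa}\sinh\kappa t\bigr)^n\,dt<\tfrac{h}{n\kappa^2}=-\tfrac{H_g}{R_g}$. That computation is the actual content of the step, and you have not done it.

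\textbf{The volume inequality and the case $n=3$.} Heintze--Karcher does not give $\Vol_g(M)<-\tfrac{H_g}{R_g}\Vol_g(\partial M)$. In $N$ it only compares $\int_MV$ with $\int_{\partial M}V$, whose ratio you already know exactly. In $M$ you have no useful Ricci lower bound, and the boundary is mean-concave ($H_g<0$).

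The missing idea is elementary. Integrate $\Delta_g\log V=-\tfrac{R_g}{n-1}-|\nabla^g\log V|_g^2$ using $\partial_\nu\log V=\tfrac{H_g}{n-1}$. This gives $\Vol_g(M)=-\tfrac{H_g}{R_g}\Vol_g(\partial M)-\tfrac{n-1}{R_g}\int_M|\nabla^g\log V|_g^2\,d\mu_g$, with strict inequality since $V$ is nonconstant. This is the paper's Rayleigh-quotient test with $\phi\equiv1$ in Proposition~\ref{vol_area}.

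For compactness of $\partial M$ when $n=3$, your conformal Myers argument is not justified as stated. Put $c=R_g+\tfrac32H_g^2$. The Gauss curvature of $V^2\gamma$ is $V^{-2}\bigl(\tfrac{c}{2}+|\nabla^\gamma\log V|_\gamma^2\bigr)$, which is not uniformly positive, and completeness of $V^2\gamma$ is unclear. The paper instead uses that $V|_{\partial M}$ is a positive Jacobi field, so $\partial M$ is a stable CMC surface, and then applies Rosenberg's theorem.

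Your Gauss-equation derivation of the boundary identity is correct and equivalent to Lemma~\ref{lem_ident}.
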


One of the main ingredients in the proof of this theorem is the following lemma.

\begin{lemma}\label{prop:positive-scalar-compactness}
Let $(M^n,g,V)$, $n\geqslant3$, be a connected static manifold with boundary. If $n>3$ we additionally assume that $\partial M$ is compact (but not necessary connected). Suppose that $R_g>0$ and $V$ does not vanish on $M$. Then $M$ is compact and at least one boundary component has negative mean curvature.
\end{lemma}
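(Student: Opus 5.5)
The plan is to pass to the associated Einstein manifold and argue there. Replacing $V$ by $-V$ if necessary, we may assume $V>0$ on $M$. Put $N=M\times S^1$ with $\bar g=g+V^2\,dt^2$.

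\emph{Einstein structure.} A standard warped-product computation gives, in $M$-directions,
\[
\operatorname{Ric}_{\bar g}=\operatorname{Ric}_g-\frac{\operatorname{Hess}_gV}{V}.
\]
By the static equation \eqref{static} this equals $-\dfrac{\Delta_gV}{V}\,g=\dfrac{R_g}{n-1}\,g$, using \eqref{traced}. The $dt^2$-component is $-\dfrac{\Delta_g V}{V}\,V^2dt^2$, which is also $\dfrac{R_g}{n-1}V^2dt^2$. Hence $\operatorname{Ric}_{\bar g}=\dfrac{R_g}{n-1}\bar g$ with $\dfrac{R_g}{n-1}=:nk>0$.

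\emph{Boundary of $N$.} The boundary $\partial N=\partial M\times S^1$ has second fundamental form $B_g+V\,\partial_\nu V\,dt^2$. By the umbilicity result of \cite[Proposition 1]{cruz2023static} and the Robin condition \eqref{tracedb}, this equals $\lambda_i\bar g$ on each component $\Sigma_i\times S^1$. Here $\lambda_i=H_i/(n-1)$ is a constant, of either sign.

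\emph{Distance to the boundary.} Next I bound $\operatorname{dist}_{\bar g}(p,\partial N)$ uniformly. Take an interior point $p$ and a minimizing curve from $p$ to $\partial N$. It is a geodesic hitting $\partial N$ orthogonally, and it contains no focal point of $\partial N$ in its interior. Riccati comparison along such a normal geodesic, with $\operatorname{Ric}\geq nk$ and initial shape operator $\lambda_i\,\mathrm{Id}$, gives a focal point before distance $\ell(\lambda_i)=\frac{1}{\sqrt k}\operatorname{arccot}(\lambda_i/\sqrt k)$. This is finite for every real $\lambda_i$. So $d_{\bar g}(p,\partial N)\le L:=\max_i \ell(\lambda_i)$.

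If there are infinitely many components, I need $\inf_i\lambda_i>-\infty$ to keep $L$ finite. This is automatic when $\partial M$ is compact, since it then has finitely many components.

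The projection $N\to M$ does not increase length, since $\bar g\geq g$ on horizontal vectors. Hence $d_g(p,\partial M)\le L$ for all $p\in M$. Since $(M,g)$ is complete up to the boundary, $M$ is then compact provided $\partial M$ is compact, by Hopf--Rinow for the closed $L$-neighbourhood of a compact set.

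\emph{Compactness of $\partial M$ when $n=3$.} This is the step I expect to be the main obstacle. I would try to show $\partial M$ is compact through an intrinsic Bonnet--Myers argument on the surface $\partial M$. The Gauss equation reads $R_{\partial M}=R_g-2\operatorname{Ric}_g(\nu,\nu)+H^2-|B|^2$, with $|B|^2=H^2/2$. The difficulty is controlling $\operatorname{Ric}_g(\nu,\nu)=\dfrac{\operatorname{Hess}V(\nu,\nu)-\Delta V}{V}$ on the boundary.

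An alternative I would try is to run the distance argument inside $\partial N$ or on a component of $\partial M$ directly. In dimension $3$ the relevant curvature quantities should be pointwise determined by $R_g$, $H$ and $|\nabla^{\partial M}\log V|$, giving a positive lower bound on the Gauss curvature. Failing that, I would use that a complete surface of positive curvature bounded below by a positive constant is compact.

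\emph{Negative mean curvature.} Once $M$ is compact, integrate \eqref{traced} and use \eqref{tracedb}:
\[
-\frac{R_g}{n-1}\int_MV\,d\mu_g=\int_{\partial M}\partial_\nu V\,d\sigma_g=\sum_i\frac{H_i}{n-1}\int_{\Sigma_i}V\,d\sigma_g .
\]
The left side is negative and $V>0$, so some $H_i<0$.
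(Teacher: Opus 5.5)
For compact $\partial M$ your argument is correct and follows the paper's route, with the cited result unpacked. The paper passes to $(N,h)$ and quotes Kasue's Theorem~C(1). The content of that theorem is exactly your Riccati bound on the focal distance of an umbilic boundary when $\operatorname{Ric}_h=nk\,h$ with $k>0$. The existence of a component with $H_i<0$ then follows from \eqref{static_ident}, as you do. (Incidentally, $\ell(\lambda)=\frac{1}{\sqrt k}\operatorname{arccot}(\lambda/\sqrt k)<\pi/\sqrt k$ for every real $\lambda$, so no lower bound on the $\lambda_i$ is ever needed.)

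The gap is the case $n=3$ without assuming $\partial M$ compact. The statement requires this case, and you give no argument for it; moreover, the routes you sketch cannot work. Carry out your Gauss-equation computation: the tangential trace of \eqref{static} gives $V\operatorname{Ric}_g(\nu,\nu)=-\Delta_\gamma V-\frac{H_i^2}{2}V$ on $\partial_iM$, hence
\[
K_\gamma=\frac12R_g+\frac34H_i^2+\frac{\Delta_\gamma V}{V}.
\]
The term $\Delta_\gamma V/V$ has no sign, so there is no pointwise positive lower bound on the Gauss curvature. Similarly, $\partial N$ only carries constant positive \emph{scalar} curvature $R_g+\frac32H_i^2$, so no Myers-type argument is available there.

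The missing idea is to read this identity spectrally. It says $J_{\partial_iM}V=0$ for the Jacobi operator $J_{\partial_iM}=-\Delta_\gamma-(\operatorname{Ric}_g(\nu,\nu)+|B_g|^2)$. Since $V>0$, $V$ is a first eigenfunction with eigenvalue $0$, so $\partial M$ is a stable CMC surface. Equivalently, for compactly supported $\phi$,
\[
\int_{\partial_iM}\left(|\nabla^\gamma\phi|_\gamma^2+K_\gamma\phi^2\right)d\sigma_g\geqslant\left(\frac12R_g+\frac34H_i^2\right)\int_{\partial_iM}\phi^2\,d\sigma_g .
\]
This integral positivity, unlike a pointwise bound, does give an intrinsic diameter bound of order $R_g^{-1/2}$. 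One way is a Schoen--Yau type second-variation argument for curves minimizing $\int_\sigma V\,ds$ in $\partial_iM$. For stable CMC surfaces in a $3$-manifold with $R_g>0$, this is Rosenberg's Theorem~1, which the paper invokes to get compactness of $\partial M$ before applying Kasue. Your idea of working on $\partial N$ is the $S^1$-symmetrized form of the same inequality, but it needs this scalar-curvature diameter estimate rather than a Ricci one.

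Note that the estimate is componentwise: it makes each component of $\partial M$ compact. To conclude that $\partial M$ itself is compact, one also needs finitely many components, a point the paper does not address.
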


The proof of this result involves the associated Einstein manifold. We provide the requisite background in Section~\ref{sec_prelim}. This technique proves equally useful in the cases of zero and negative scalar curvature, allowing us to establish the following.

\begin{theorem}\label{prop:scalar-flat-disconnected-boundary}
Let $(M^n,g,V)$, $n\geqslant3$, be a connected static manifold with boundary. Suppose that $R_g=0$ and $V$ does not vanish on $M$. Assume that $\partial M$ is disconnected, that at least one connected component of $\partial M$ is compact, and that every boundary component has nonnegative mean curvature. Then $\partial M$ has exactly two connected components and
\[
    (M,g)\cong\left([0,a]\times\Sigma,\;dt^2+g_\Sigma\right)
\]
for some compact connected Riemannian manifold $(\Sigma,g_\Sigma)$. Moreover, $V=const$, $(\Sigma,g_\Sigma)$ and $(M,g)$ are Ricci-flat, and both boundary
components are totally geodesic. 
\end{theorem}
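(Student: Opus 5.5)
We plan to work with the associated Einstein manifold, which we now recall. Since $V$ does not vanish, we may assume $V>0$. We form the warped product $(\hat M,\hat g)=(M\times S^1,\,g+V^2d\theta^2)$ (equivalently $M\times\mathbb R$). The static equation \eqref{static} with $R_g=0$ says exactly that $\hat g$ is Ricci-flat. The boundary condition \eqref{staticb} says that $\partial\hat M=\partial M\times S^1$ is totally umbilical in $\hat M$ with umbilicity factor $H_g/(n-1)$. Indeed, the $\theta\theta$-component of the second fundamental form is $V\partial_\nu V=\frac{H_g}{n-1}V^2$, by \eqref{tracedb}. Hence the mean curvature of each boundary component of $\hat M$ is $\hat H=\frac{n}{n-1}H_g\geqslant 0$, a constant. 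We use the outward normal, so $H_g\geqslant0$ means the boundary is mean-convex with respect to the outward direction.

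The main step is a splitting argument on $\hat M$, or directly on $M$ with the weighted structure. Choose a compact component $\Sigma_0$ of $\partial M$ and a second component $\Sigma_1$. Completeness and disconnectedness of $\partial M$ give a minimizer among curves joining $\Sigma_0$ to $\Sigma_1$, or else a ray/line. We then study the distance function $\rho=d(\cdot,\Sigma_0)$ on $\hat M$, which is $S^1$-invariant. The mean curvature comparison (Riccati/Heintze--Karcher) in the Ricci-flat space $\hat M$ gives $\hat\Delta\rho\leqslant -\hat H_0\leqslant0$ in the barrier sense. By the same argument from the other end, $\hat\Delta\rho_1\leqslant0$ for $\rho_1=d(\cdot,\Sigma_1)$. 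Along a minimizing segment of length $a=d(\Sigma_0,\Sigma_1)$, the function $\rho+\rho_1-a\geqslant0$ vanishes and is superharmonic. By the strong maximum principle (Calabi/Eschenburg--Heintze) it vanishes identically on a neighbourhood of the segment, and then on all of $\hat M$ by connectedness. The boundary version uses Hopf at boundary points. Equality in the Riccati comparison forces $\hat\nabla^2\rho=0$, $\hat H_0=\hat H_1=0$, and $\widehat{\Ric}(\partial_\rho,\partial_\rho)=0$. So $\hat M$ splits isometrically as $[0,a]\times\hat\Sigma$, and there are exactly two boundary components, since any third would lie at positive distance inside the product. The main obstacle is making this maximum principle work up to the boundary, and ensuring a minimizing segment exists when only one component is compact. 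Compactness of $\Sigma_0$ gives a nearest point on $\Sigma_1$ realizing $d(\Sigma_0,\Sigma_1)$.

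Descending to $M$: $\rho$ is $\theta$-independent, so $\partial_\rho$ is tangent to $M$, and $\nabla^2\rho=0$ on $M$ together with $\hat g(\hat\nabla_{\partial_\theta}\partial_\rho,\partial_\theta)=V\partial_\rho V=0$. Thus $V$ is constant along $t=\rho$, and $M\cong[0,a]\times\Sigma$ with $g=dt^2+g_\Sigma$, where $\Sigma=\Sigma_0$ is compact and connected. The boundary components are totally geodesic, so $H_g=0$. By \eqref{tracedb}, $\partial_\nu V=0$.

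Finally, restrict \eqref{static} with $\Delta V=0$ to the slice $\Sigma$, where $V=V(x)$. The $tt$-component gives $V\Ric(\partial_t,\partial_t)=0$, which holds automatically. The tangential components give $\Hess_\Sigma V=V\Ric_\Sigma$, and $\Delta_\Sigma V=0$. On compact $\Sigma$, a harmonic function is constant. Then $\Ric_\Sigma=0$, so $(M,g)$ is Ricci-flat as a product.
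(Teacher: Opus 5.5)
Your proposal is correct and follows the paper's route. Both pass to the Ricci-flat associated Einstein manifold, whose boundary is umbilic with mean curvature $\frac{n}{n-1}H_i\geqslant0$, split it along the distance function from a compact boundary component, and descend to $M$. The differences are minor: the paper cites Kasue's splitting theorem (Theorem~B in \cite{kasue1983ricci}) rather than re-deriving it, which also disposes of the boundary maximum-principle technicalities you flagged, and your identity $\hat\nabla^2\rho(\partial_\theta,\partial_\theta)=V\partial_\rho V$ is a more explicit substitute for the paper's $S^1$-equivariance and quotient step.
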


\begin{theorem}\label{prop:scalar-flat-noncompact-boundary}
Let $(M^n,g,V)$, $n\geqslant3$, be a connected noncompact static manifold with compact boundary. Suppose that $R_g=0$, $V$ does not vanish on $M$ and that every boundary component has nonnegative mean curvature. Then $\partial M$ is connected and
\[
    (M,g)\cong\left([0,\infty)\times\partial M,\;dt^2+g_{\partial M}\right).
\]
Moreover, $V=const$, $(M,g)$ and $(\partial M,g_{\partial M})$ are Ricci-flat and $\partial M$ is totally geodesic.
\end{theorem}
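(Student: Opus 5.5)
We prove Theorem~\ref{prop:scalar-flat-noncompact-boundary} through the associated Einstein manifold. Since $V$ has no zeros, we may assume $V>0$. Form the warped product
\[
\bar M=M\times S^1,\qquad \bar g=g+V^2d\theta^2 .
\]
The static equation \eqref{static} with $R_g=0$ gives $\Delta_gV=0$ and $\operatorname{Hess}_gV=V\operatorname{Ric}_g$. Hence $\operatorname{Ric}_{\bar g}=0$: the $(M,M)$ block is $\operatorname{Ric}_g-V^{-1}\operatorname{Hess}_gV=0$, and the $\theta\theta$ block is $-V\Delta_gV=0$. The boundary of $\bar M$ is $\partial M\times S^1$. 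By \eqref{staticb} and the umbilicity from \cite[Proposition 1]{cruz2023static}, its second fundamental form is $B_g\oplus V\partial_\nu V\,d\theta^2=\frac{H_g}{n-1}\bar g$. So $\partial\bar M$ is umbilic with mean curvature $\bar H=\frac{n}{n-1}H_g\geqslant0$ on every component, measured with respect to the outward normal.

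Next we produce a line-type object. $M$ is complete and noncompact, and $\partial M$ is compact. Take a ray $\gamma$ leaving a compact neighbourhood of $\partial M$, and let $\beta$ be its Busemann function. Then $\beta$ is defined on $\bar M$ and is $S^1$-invariant. Since $\operatorname{Ric}_{\bar g}\geqslant0$, Laplacian comparison gives $\Delta_{\bar g}\beta\geqslant0$ in the barrier sense away from the boundary. The boundary components are mean convex toward the interior in the right sense ($H\ge0$ with outward normal means the boundary bends away), and this is exactly the sign required for a Kasue/Croke--Kleiner type argument. We work with the distance function $r=d(\cdot,\partial\bar M)$ to the whole boundary. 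Riccati comparison with $\operatorname{Ric}\geqslant0$ and initial mean curvature $-\bar H\leqslant0$ on inward level sets gives $\Delta r\leqslant -\bar H\leqslant 0$ in the barrier sense. Conversely, noncompactness lets us take a ray from $\partial\bar M$ realizing $r$ and the corresponding Busemann function $b$. Then $r+b\geqslant$ const, with equality along the ray, and $\Delta(r+b)\leqslant0$. The strong maximum principle (Calabi/Eschenburg--Heintze) forces $r+b$ to be constant. Hence $r$ is smooth and harmonic, $|\nabla r|=1$, and Bochner's formula gives $\operatorname{Hess}r=0$. The normal exponential map from the boundary is then a global isometry $[0,\infty)\times\partial\bar M$. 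This step, which needs every boundary component's inward geodesics to be minimizing for all time, also shows that $\partial\bar M$ is connected: a second component would be reached at a finite $r$, contradicting the product structure, or equivalently $r+b$ const fails. So $\partial M$ is connected, $\bar H=0$, and hence $H_g=0$.

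Finally we descend to $M$. The field $\nabla r$ is parallel on $\bar M$ and $S^1$-invariant, hence tangent to $M$. So $\bar g=dt^2+\bar g_{\partial}$, and restricting gives $g=dt^2+g_{\partial M}$ on $M\cong[0,\infty)\times\partial M$. The potential $V$ is then constant in $t$, because $\partial_tV$ satisfies $\partial_t(\log V)$ being a component of a parallel splitting (the $S^1$ fiber length is $2\pi V$ and is independent of $t$ in the product). Then $V$ is a positive harmonic function on the cylinder depending only on $\partial M$, hence $\Delta_{\partial M}V=0$ on compact $\partial M$, so $V$ is constant. Then $\operatorname{Ric}_g=V^{-1}\operatorname{Hess}V=0$, so $\partial M$ is Ricci-flat, and \eqref{staticb} gives $B_g=0$, i.e.\ $\partial M$ is totally geodesic.

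The main obstacle is the boundary rigidity step: making the comparison $\Delta r\leqslant0$ and the maximum principle rigorous in the weak sense on $\bar M$, and ruling out extra boundary components. I would first try to quote a Kasue-type splitting theorem for manifolds with $\operatorname{Ric}\geqslant0$ and mean-convex compact boundary. In dimension $n\geqslant3$, $\bar M$ has dimension $n+1$, and no dimension restriction is needed, since the boundary is assumed compact.
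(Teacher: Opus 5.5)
Your proposal is correct and follows essentially the same route as the paper. You pass to the Ricci-flat associated Einstein manifold, whose umbilic boundary has mean curvature $\frac{n}{n-1}H_g\geqslant0$, apply the Kasue splitting, descend the $S^1$-invariant product structure to $M$, and use that the orbit length $V$ is independent of $t$, so that $V$ is harmonic on the compact $\partial M$ and hence constant. The paper simply cites Kasue's Theorem~C for the splitting and connectedness, while you sketch its proof via the distance to the boundary, a Busemann function and the Calabi maximum principle. Fix the Busemann sign convention: $b(x)=\lim_{t\to\infty}\bigl(d(x,\gamma(t))-t\bigr)$ is the superharmonic choice, so $r+b$ is superharmonic and $r+b\geqslant0$. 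Your justification that $V$ does not depend on $t$ is vague as written. The paper makes it precise by showing the $S^1$-action commutes with the normal exponential map, so the Killing field has $t$-independent length.
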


\begin{theorem}\label{prop:negative-scalar-bound}
Let $(M^n,g,V)$, $n\geqslant3$, be a connected noncompact static manifold with compact boundary. Suppose that $R_g<0$, $H_g>0$, and $V$ does not vanish on $M$. Then 
\[
   R_g\leqslant-\frac{n}{n-1} H_g^2.
\]

If equality holds, then, setting
\[
    \kappa=\sqrt{-\frac{R_g}{n(n-1)}},
\]
one has $H_g=(n-1)\kappa$, and
\[
    (M,g)\cong\left([0,\infty)\times\partial M,\;dt^2+e^{-2\kappa t}g_{\partial M}\right).
\]
Moreover, $V(t,x)=e^{-\kappa t}V_0(x)$, where $V_0=V|_{\partial M}$.
\end{theorem}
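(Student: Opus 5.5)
We will work with the associated Einstein manifold. Normalize $V>0$ and set $\bar M=M\times S^1$ (or $M\times\mathbb R$) with warped metric $\bar g=g+V^2d\theta^2$. By the static equation, $\bar g$ is Einstein with $\operatorname{Ric}_{\bar g}=\frac{R_g}{n-1}\bar g=-n\kappa^2\bar g$. By \eqref{staticb} and the umbilicity of $\partial M$, the boundary $\partial\bar M=\partial M\times S^1$ is totally umbilic in $\bar M$, with constant mean curvature $\bar H=\frac{n}{n-1}H_g$ on each component. Write $h=H_g/(n-1)>0$ for the umbilicity factor, so the second fundamental form is $h\,\bar g$.

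\textbf{Step 1: a Riccati comparison along inward normal geodesics.} Let $t$ be the distance to $\partial M$ in $M$, and work on the set where the normal exponential map is a diffeomorphism. The level sets of $t$ in $\bar M$ are $\{t\}\times S^1$-bundles. Their shape operator $A$, taken with respect to the normal $-\partial_t$ so that the mean curvature decreases inward, satisfies
\[
A'+A^2+\bar R(\cdot,\partial_t)\partial_t=0,
\]
and tracing gives $m'+|A|^2+\operatorname{Ric}(\partial_t,\partial_t)=0$ with $\operatorname{Ric}=-n\kappa^2$. Hence $m'\le -m^2/n+n\kappa^2$, with the initial value $m(0)$ equal to $\bar H$ measured with sign conventions such that the inward mean curvature is $-\bar H=-nh$.

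Here is the key observation. If $h>\kappa$, i.e.\ $R_g>-\frac{n}{n-1}H_g^2$, then the comparison solution of $m'=n\kappa^2-m^2/n$ with $m(0)=-nh<-n\kappa$ is $m=-n\kappa\coth(\kappa(t_0-t))$, which blows up to $-\infty$ in finite time. So every inward normal geodesic contains a focal point of $\partial\bar M$ within a distance $d_0$ determined by $h$ and $\kappa$. Since $\bar M$ is complete, every point lies at distance at most $d_0$ from $\partial\bar M$; the $S^1$ directions are horizontal-lifted, and the $M$-distance to $\partial M$ agrees with the $\bar M$-distance. Therefore $M$ lies in a bounded collar of the compact $\partial M$ and is compact. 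This contradicts noncompactness, so $h\le\kappa$, which is exactly $R_g\le-\frac{n}{n-1}H_g^2$.

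The main obstacle is making this rigorous: one must control the cut locus and use distance comparison rather than just a smooth exponential map. I would handle it with the standard Heintze–Karcher-type argument. Each point of $M$ has a minimizing geodesic to $\partial M$, which hits the boundary orthogonally. Along it, $t$ is smooth before any focal point, and minimizing geodesics cannot pass a focal point. Alternatively, I would replace $V$-dependence by working directly on $M$: the $S^1$ fiber contributes the term $\partial_t V/V$ to $m$, which is why the lift is convenient.

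\textbf{Step 2: the equality case $h=\kappa$.} Now the comparison solution is $m\equiv -n\kappa$, with no focal points, and noncompactness is consistent. Consider $\bar u=\log$ of the Jacobian of the normal exponential map. Since $m\le -n\kappa$ everywhere, one gets $\operatorname{Vol}$ growth bounds. To force equality I would use a Busemann-type argument instead. The function $t$ satisfies $\bar\Delta t\ge n\kappa$ in the barrier sense, from the Riccati bound $-m\ge n\kappa$ and Laplacian comparison. Compare this with the Busemann function $b$ of a ray, or use the strong maximum principle on $\bar M$ applied to $\bar\Delta t$ with $\operatorname{Ric}=-n\kappa^2$.

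Alternatively, I would follow Kasue/Croke–Kleiner rigidity: with the umbilic boundary of mean curvature exactly $n\kappa$, the normal exponential map is injective with no focal points. Equality then propagates because a noncompact end forces a ray emanating orthogonally from $\partial\bar M$. Along it, the trace Riccati inequality would make $m$ cross $-n\kappa$ strictly and blow up unless $|A|^2=m^2/n$ and $A\equiv -\kappa\,\mathrm{Id}$. The strong maximum principle, applied to $\Delta t - n\kappa\ge0$ with $t$ compared to the distance function, spreads this along all normal geodesics. Consequently $\bar g=dt^2+e^{-2\kappa t}\bar g_{\partial}$, and restricting to $M$ gives $g=dt^2+e^{-2\kappa t}g_{\partial M}$ together with $V^2=e^{-2\kappa t}V_0^2$, i.e.\ $V=e^{-\kappa t}V_0$.
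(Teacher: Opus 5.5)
Your overall route is the paper's: pass to the associated Einstein manifold $(\bar M,\bar g)$, with $\operatorname{Ric}_{\bar g}=-n\kappa^2\bar g$ and umbilic boundary with factor $h=H_g/(n-1)$, then run a Kasue-type comparison. The paper simply cites Kasue's Theorem~C for both the inequality and the rigidity, and then uses $S^1$-equivariance. You instead try to reprove Theorem~C.

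Step~1 does this correctly and self-containedly. Since $m(0)=-nh<-n\kappa$, the traced Riccati inequality $m'\le n\kappa^2-m^2/n$ forces a focal point of $\partial\bar M$ before $t_0$, where $\coth(\kappa t_0)=h/\kappa$, along every minimizing geodesic from the boundary. Hence $M$ lies within distance $t_0$ of the compact $\partial M$ and is compact. This is Theorem~C(1) in the present setting.

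Step~2, however, does not prove the rigidity.

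\textbf{Injectivity is assumed, not proved.} You start from the claim that in the equality case ``the normal exponential map is injective with no focal points'', but that is what must be shown. The condition $m(0)=-n\kappa$ gives only $m\le-n\kappa$. A priori nothing stops $m$ from dropping strictly below $-n\kappa$ and blowing up along a general normal geodesic, because the trace-free part of $\bar R(\cdot,\partial_t)\partial_t$ can destroy the umbilicity of $A$.

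\textbf{The propagation step is missing a comparison function.} Your argument along a $\partial\bar M$-ray $\gamma$ is fine: there are no focal points on a ray, so $m\equiv-n\kappa$ and $A\equiv-\kappa\,\mathrm{Id}$ on $\gamma$. But spreading this to all normal geodesics is the whole content of the rigidity. ``The strong maximum principle applied to $\Delta t-n\kappa\ge0$ with $t$ compared to the distance function'' does not do it, since $t$ is itself that distance function. You need a second function satisfying the opposite inequality and touching $t$. That function is the Busemann function $b=\lim_{s\to\infty}\bigl(s-d(\cdot,\gamma(s))\bigr)$, and the argument runs as follows.
\begin{itemize}
\item One has $b\le t$, with equality on $\gamma$.
\item $\Delta b\ge-n\kappa$ in the barrier sense, by Laplacian comparison for $d(\cdot,\gamma(s))$ under $\operatorname{Ric}_{\bar g}\ge-n\kappa^2$. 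This is legitimate here because $\partial\bar M$ is strictly convex ($B=\kappa\bar g$), so minimizing segments between interior points avoid the boundary.
\item $\Delta t\le-n\kappa$ in the barrier sense. With $\Delta=\operatorname{tr}\operatorname{Hess}$ the sign is this one, not your ``$\bar\Delta t\ge n\kappa$''.
\item So $t-b\ge0$ is barrier-superharmonic and vanishes on $\gamma$. Calabi's strong maximum principle gives $t\equiv b$, hence $t$ is smooth with $\Delta t\equiv-n\kappa$.
\item Equality in the Riccati and Cauchy--Schwarz inequalities then gives $A\equiv-\kappa\,\mathrm{Id}$ everywhere. The normal exponential map $[0,\infty)\times\partial\bar M\to\bar M$ is a diffeomorphism, and $\bar g=dt^2+e^{-2\kappa t}\bar g_{\partial\bar M}$.
\end{itemize}
Alternatively, cite the rigidity part of Theorem~C, as the paper does.

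\textbf{Descent to $M$.} Say explicitly that the inward normal of $\partial\bar M$ is horizontal and the slices $M\times\{\theta\}$ are totally geodesic. Then the normal exponential map of $\partial\bar M$ is $(t,x,\theta)\mapsto(F(t,x),\theta)$, with $F$ the normal exponential map of $\partial M$. Comparing $F^*g+(V\circ F)^2d\theta^2$ with $dt^2+e^{-2\kappa t}(g_{\partial M}+V_0^2d\theta^2)$ gives both $g=dt^2+e^{-2\kappa t}g_{\partial M}$ and $V=e^{-\kappa t}V_0$.
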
 

\subsection{Comparison with Sheng--Zhao's theorem}

Theorems~\ref{prop:scalar-flat-disconnected-boundary}--\ref{prop:negative-scalar-bound} and Theorem~1.4 of~\cite{sheng2026obata} are complementary.

In the scalar-flat noncompact case with compact boundary and $H_g=0$, Sheng--Zhao's theorem yields that $(M,g)$ is Ricci-flat, $\partial M$ is totally geodesic, and $V$ is constant. Under the assumptions of Theorem~\ref{prop:scalar-flat-noncompact-boundary}, we obtain in addition a global splitting. No decay assumption at infinity is required. Theorem~\ref{prop:scalar-flat-disconnected-boundary} also treats disconnected boundary, which is excluded by the simplicity assumption in~\cite{sheng2026obata}. 

For $R_g<0$, Theorem~\ref{prop:negative-scalar-bound} gives, for a noncompact simple static manifold with compact boundary, $R_g\leqslant-\dfrac{n}{n-1}H_g^2$. Hence, combined with the opposite inequality assumed in Sheng--Zhao's theorem, $R_g\geqslant-\dfrac{n}{n-1}H_g^2$ equality is forced. In this case one recovers the same exponential warped-product rigidity\footnote{Sheng--Zhao use the coordinate $t\in(-\infty,0]$ increasing toward the boundary, whereas here we use $t\in[0,\infty)$ increasing inward from the boundary. Thus their factor $e^{2H_gt/(n-1)}$ becomes $e^{-2H_gt/(n-1)}$ after the change of variable $t\mapsto -t$.}. In this regime, Theorem~\ref{prop:negative-scalar-bound} does not require the decay condition imposed in the noncompact case in~\cite{sheng2026obata}, nor the additional upper bound $R_g\leqslant-H_g^2$.

On the other hand, Theorem~1.4 of~\cite{sheng2026obata} applies in several situations not covered by our results, including compact manifolds with connected boundary and $H_g\geqslant 0$. Moreover, under the assumption
\[
-\frac{n}{n-1}H_g^2\leqslant R_g\leqslant-H_g^2,
\]
Sheng--Zhao prove the Obata-type equation
\[
\operatorname{Hess}_gV+\frac{R_g+H_g^2}{n-1}Vg=0 \quad \text{in} \quad M
\]
together with
\[
\frac{\partial V}{\partial\nu}=\frac{H_g}{n-1}V \quad \text{on} \quad \partial M,
\]
and in particular show that $(M,g)$ is Einstein.

Thus, in the scalar-flat setting and in the noncompact negative-scalar-curvature regime, Theorems~\ref{prop:scalar-flat-disconnected-boundary}--\ref{prop:negative-scalar-bound} provide additional global conclusions or require weaker auxiliary assumptions, while Sheng--Zhao's theorem covers other regimes and yields the stronger Obata-type rigidity under its curvature hypotheses.

\subsection*{Paper organization} The proof of Theorem~\ref{thm_main}, together with the proofs of all the results stated in the Introduction, is given in Section~\ref{sec_proofs}. Section~\ref{sec_notation} collects the notation and conventions used throughout the paper, while Section~\ref{sec_prelim} contains the necessary technical background. In Section~\ref{appendix}, we derive several simple but apparently new identities for static manifolds with boundary. There we also discuss static manifolds without boundary and illustrate how the associated Einstein manifold technique applies in that setting.

\subsection*{Acknowledgments} The author thanks Lucas Ambrozio for sharing the argument in the proof of Theorem~\ref{thm:positive-scalar-curvature}, which inspired this article.  During the preparation of this manuscript, OpenAI's ChatGPT was utilized to assist with structuring the text, refining the exposition, proofreading, and offering feedback on the presentation of mathematical arguments. The author carried out all computations and proofs by hand and independently reviewed and verified all AI-generated suggestions. The author takes full responsibility for the mathematical content and conclusions of the article. This article is an output of a research project (HSE-BR-2025-059) implemented as part of the Basic Research Program at the National Research University Higher School of Economics (HSE University).

\section{Notation and conventions}\label{sec_notation}

Throughout this article, we use the following notation and conventions.

\begin{itemize}
\item $\operatorname{Ric}_g$ and $R_g$ denote the Ricci tensor and scalar curvature of a Riemannian manifold $(M,g)$, respectively.

\item $B_g$ and $H_g$ denote the second fundamental form and mean curvature of $\partial M$ with respect to the outward unit normal $\nu$. We use the conventions
\[
B_g(X,Y)=g(\nabla^g_X\nu,Y),\qquad H_g=\operatorname{tr}_{g_{\partial M}}B_g,
\]
so that Euclidean spheres have positive mean curvature. Writing $\partial M$ as the disjoint union of its connected components, we set
\[
\partial M=\bigsqcup_{i=1}^k\partial_iM,\qquad B_i:=B_g|_{\partial_iM},\qquad H_i:=H_g|_{\partial_iM}.
\]

\item $\operatorname{Hess}_g$ denotes the Hessian, $\Delta_g=\operatorname{tr}_g\operatorname{Hess}_g$ denotes the Laplacian, and $\nabla^g$ denotes the gradient.

\item When no separate notation is introduced for the induced metric on a submanifold $\Sigma$, we write $R_\Sigma$, $\nabla^\Sigma$, and $\Delta_\Sigma$ for its scalar curvature, gradient, and Laplacian. If $\Sigma$ is a hypersurface with a chosen unit normal, $B_\Sigma$ and $H_\Sigma$ denote its second fundamental form and mean curvature, with the same conventions as above.

\item $\operatorname{Vol}_g(M)$ denotes the Riemannian volume of $(M,g)$. When $M$ is two-dimensional, we also write $\operatorname{Area}_g(M)$ to emphasize its dimension.

\item $\mathcal P$ denotes the vector space of static potentials on a given Riemannian manifold with boundary $(M^n,g)$:
\[
\begin{split} \mathcal P=\biggl\{f\in C^\infty(M):\;&\operatorname{Hess}_g f=f\left(\operatorname{Ric}_g-\frac{R_g}{n-1}g\right)\ \text{in }M,\\ &(\partial_\nu f)g_{\partial M}=fB_g\ \text{on }\partial M\biggr\}. \end{split}
\]
\end{itemize}

\section{Preliminaries}\label{sec_prelim}

For the reader's convenience, we collect in this section some basic properties of static manifolds with boundary

\subsection{Identities} There are several identities for static manifolds with boundary established in the literature. We start probably with the simplest one. Assuming that $(M,g)$ is compact, integrating~\eqref{traced} and using~\eqref{tracedb}, we obtain
\[
-\frac{R_g}{n-1}\int_MV\,d\mu_g=\int_M\Delta_gV\,d\mu_g=\sum_{j=1}^k\int_{\partial_iM}\partial_\nu V\,d\sigma_g=\frac{1}{n-1}\sum_{i=1}^kH_i\int_{\partial_iM}V\,d\sigma_g.
\]
Therefore,
\begin{align}\label{static_ident}
-R_g\int_MV\,d\mu_g=\sum_{i=1}^kH_i\int_{\partial_iM}V\,d\sigma_g.
\end{align}

This, in particular, implies that if $(M,g,V)$ is simple and $V$ does not vanish on $M$, then \eqref{static_ident} implies that $R_g$ and $H_g$ are of opposite sign and $R_g=0$ if and only if $H_g=0$ (for the case of a static manifold with boundary of general type, see e.g. Theorem 3.5 in~\cite{sheng2025static}).

Further, in~\cite{medvedev2024static} we established a general formula (1.6), which for a static manifold with boundary $(M^n,g,V)$ with nowhere-vanishing $V$ yields
\begin{align}\label{med}
\int_MV\left|\mathring{\operatorname{Ric}}_g\right|_g^2\,d\mu_g+\sum_{i=1}^kH_i\left(\left(\frac{H_i}{n-1}\right)^2+\frac{R_g}{n(n-1)}\right)\int_{\partial_iM}V\,d\sigma_g=0.
\end{align}
This formula can be also deduced from Reilly's formula applied to $f=\sqrt{V}$ on $(M,g)$ or from the Gauss--Bonnet--Chern formula on the associated Einstein manifold (when $n=3$). Also notice that for a connected boundary, the formula in Proposition~2.1 of~\cite{sheng2026obata}
\begin{align}\label{sheng_zhao}
\int_M V^{-1}\left|\operatorname{Hess}_gV+\frac{R_g+H_g^2}{n-1}Vg\right|_g^2\,&d\mu_g\\\nonumber&=\frac{R_g+H_g^2}{n-1}\left(R_g+\frac{n}{n-1}H_g^2\right)\int_MV\,d\mu_g.
\end{align}
is algebraically equivalent to~\eqref{med}. Indeed, using the static equation~\eqref{static},
\[
\operatorname{Hess}_gV+\frac{R_g+H_g^2}{n-1}Vg=V\left(\mathring{\operatorname{Ric}}_g+\left(\frac{R_g}{n}+\frac{H_g^2}{n-1}\right)g\right).
\]
Since $\mathring{\operatorname{Ric}}_g$ is trace-free,
\[
V^{-1}\left|\operatorname{Hess}_gV+\frac{R_g+H_g^2}{n-1}Vg\right|_g^2=V\left|\mathring{\operatorname{Ric}}_g\right|_g^2+nV\left(\frac{R_g}{n}+\frac{H_g^2}{n-1}\right)^2.
\]
Substituting this into~\eqref{sheng_zhao} and simplifying gives
\[
\int_MV\left|\mathring{\operatorname{Ric}}_g\right|_g^2\,d\mu_g=R_g\left(\frac{H_g^2}{(n-1)^2}+\frac{R_g}{n(n-1)}\right)\int_MV\,d\mu_g.
\]
Finally, using~\eqref{static_ident} we obtain
\[
\int_MV\left|\mathring{\operatorname{Ric}}_g\right|_g^2\,d\mu_g+H_g\left(\left(\frac{H_g}{n-1}\right)^2+\frac{R_g}{n(n-1)}\right)\int_{\partial M}V\,d\sigma_g=0.
\]

We establish some new identities for static manifolds with boundary in Subsection~\ref{identities} of Section~\ref{appendix}.

\begin{remark}
The \textit{Wang mass} formula in Remark~1.16 of~\cite{medvedev2024static} and the observation in Remark~3.3 of~\cite{sheng2026obata} are essentially two formulations of the same asymptotic identity. Indeed, in the asymptotically locally hyperbolic setting, the boundary integral at infinity in~\cite{sheng2026obata} \(\displaystyle\lim_{r\to\infty}\int_{\partial M_r}\mathring{\operatorname{Ric}}_g(\nabla^gV,\nu)\,d\sigma_g\) is, up to the normalization conventions, proportional to the \textit{Wang--Chru\'sciel--Herzlich mass}. Remark~1.16 of~\cite{medvedev2024static} expresses this asymptotic quantity in terms of an integral over $M$ together with the contributions from the boundary and the zero-level components of $V$. Thus the two formulas encode the same mass invariant, with the apparent difference arising from the normalization and from the fact that~\cite{medvedev2024static} keeps the interior and boundary terms explicitly.
\end{remark}

\subsection{Associated Einstein manifolds}\label{AEM}

An important feature of static manifolds is their close relation to Einstein manifolds. In this section we recall the basic facts needed below, following~\cite{ambrozio2017static,cruz2023static}.

Let $(M^n,g,V)$ be a static manifold, and denote its zero-level set by $\Sigma:=V^{-1}(0)$. On every connected component $\Omega$ of $M\setminus\Sigma$, the potential $V$ has a fixed sign, and one considers the warped product metric $h=g+V^2d\theta^2$ on $\Omega\times S^1$. The static equation~\eqref{static} implies that $\operatorname{Ric}_h=\dfrac{R_g}{n-1}h$ so that $(\Omega\times S^1,h)$ is Einstein away from the zero set of $V$. 

Since the length of the circle-generating Killing field $X=\partial_\theta$ is $|X|_h=|V|$, the $S^1$-orbits collapse as one approaches $\Sigma$. The \textit{associated singular Einstein manifold} $(N^{n+1},h)$ of a static manifold $(M,g,V)$ is obtained by adjoining the zero set and collapsing each circle fiber over $\Sigma$ to a point. Thus, $\Sigma$ becomes a fixed-point set of the natural isometric $S^1$-action on $N$. The metric $h$ extends smoothly across a connected component $\Sigma_i$ of $\Sigma$ when the potential is normalized so that $|\nabla^gV|_g|_{\Sigma_i}=1$. Otherwise, the associated Einstein metric has a conical singularity along the fixed-point set. In this paper we are mostly interested in the situation when $V$ does not vanish on $M$. In this case the metric $h$ has no singularities at all and $(N,h)$ is a genuine (nonsingular) Einstein manifold

If $(M,g,V)$ is a static manifold with boundary, then $\partial N=\partial M\times S^1$ and $(N,h)$ is the associated Einstein manifold has boundary. This boundary is umbilical. Indeed,~\eqref{staticb} and~\eqref{tracedb} imply that the second fundamental form and mean curvature of a connected component $\partial_iN=\partial_iM\times S^1$ are 
\begin{align}\label{aemBH}
B_{\partial_iN}=\frac{H_i}{n-1}h_{\partial_iN}, \qquad H_{\partial_iN}=\frac{n}{n-1}H_i.
\end{align}

\section{Proofs}\label{sec_proofs}

We begin by proving the main ingredient needed for Theorem~\ref{thm_main}.

\begin{proof}[Proof of Lemma~\ref{prop:positive-scalar-compactness}]
Suppose, by contradiction, that $M$ is noncompact. Consider the associated Einstein manifold $(N,h)$. Since $V$ does not vanish on $M$, $(N,h)$ is nonsingular, complete, and noncompact. Set $\kappa=\dfrac{R_g}{n(n-1)}>0$. We have $\operatorname{Ric}_h=n\kappa h$.

The boundary of $N$ is $\partial N=\partial M\times S^1$, and hence it is compact. Consequently, there exists a constant $A\in\mathbb R$ such that the mean curvature on $\partial N$ satisfies $H_{\partial N}\geqslant nA$. 

We have: $(N,h)$ is connected, complete, has compact boundary, and belongs to the class $(\kappa,A)$ in terms of~\cite{kasue1983ricci}\footnote{The sign convention for the boundary mean curvature in~\cite{kasue1983ricci} is opposite to the one adopted in this paper.}. Then \cite[Theorem~C(1)]{kasue1983ricci} implies that $\kappa\leqslant0$. This contradicts $\kappa>0$. Hence $M$ is compact.

The conclusion that at least one boundary component of $M$ has negative mean curvature follows directly from~\eqref{static_ident}.

In the case where $\dim M=3$, one can prove that $\partial M$ is automatically compact. Indeed, it is not difficult to verify that the restriction of $V$ to any boundary component $\partial_iM$ is an eigenfunction of the Jacobi operator with eigenvalue 0:
\begin{align*}
J_{\partial_iM}V&=-\Delta_{\partial_iM}V-\left(\operatorname{Ric}_g(\nu,\nu)+|B_{\partial_iM}|_g^2\right)V\\&=-\Delta_{\partial_iM}V+\frac12R_{\partial_iM}V-\frac12\left(R_g+\frac{n}{n-1}H_i^2\right)V=0.
\end{align*}
Since $V$ does not vanish on $\partial_iM$, $V$ is a first eigenfunction. This implies that $\partial M$ is a stable (possibly disconnected) CMC surface in $(M,g)$. Since $R_g>0$ is constant, $(M,g)$ satisfies the assumptions of Theorem 1 in~\cite{rosenberg2006constant}\footnote{The corresponding compactness statement is special to dimension three. In higher dimensions it fails even under strong stability and uniformly positive scalar curvature. For example, in $\mathbb S^2\times\mathbb R^{n-2}$ with product metric the totally geodesic hypersurface $\mathbb S^2\times\mathbb R^{n-3}\times\{0\}$ is complete, non-compact, and stable, while the ambient scalar curvature is identically equal to $2$.}. Hence, $\partial M$ is compact.
\end{proof}

Since Theorems~\ref{prop:scalar-flat-disconnected-boundary},~\ref{prop:scalar-flat-noncompact-boundary}, and~\ref{prop:negative-scalar-bound} rely on the same main idea as the previous proof -- namely, the passage to the associated Einstein manifold -- we prove them before the main theorem.

\begin{proof}[Proof of Theorem~\ref{prop:scalar-flat-disconnected-boundary}]
The associated Einstein $(N,h)$ is complete and Ricci-flat. By \eqref{aemBH}, then the mean curvature of a connected component $\partial_iN$ of $\partial N$
\[
    H_{\partial_iN}=\frac{n}{n-1}H_i\geqslant0.
\]
Since $\partial N$ is disconnected and has a compact connected component, Theorem~B in~\cite{kasue1983ricci} implies that
\[
    (N,h)\cong\left([0,a]\times\Gamma,\;dt^2+h_\Gamma\right),
\]
where $\Gamma$ is a compact connected component of $\partial N$. In particular, $\partial N$ has exactly two connected components.

The $S^1$-action preserves every connected component of $\partial N$. Moreover, the distance function from $\Gamma$ is invariant under the
$S^1$-action. Hence the above product splitting is $S^1$-equivariant. Passing to the quotient by the circle action gives
\[
    (M,g)\cong\left([0,a]\times\Sigma,\;dt^2+g_\Sigma\right),
\]
where $ \Sigma=\Gamma/S^1$. In particular, $(M,g)$ is compact and the boundary components are totally geodesic. Since $R_g=0$ equations~\eqref{traced} and \eqref{tracedb} then imply that $V=const$, as $\Delta_gV=0$ in $M$ and $\partial_\nu V=0$ on $\partial M$.

The static equation~\eqref{static} implies that $\operatorname{Ric}_g=0$. For the product metric $g=dt^2+g_\Sigma$ the Ricci tensor satisfies
\[
    \operatorname{Ric}_g|_{T\Sigma}=\operatorname{Ric}_{g_\Sigma}.
\]
Therefore $\operatorname{Ric}_{g_\Sigma}=0$.
\end{proof}

\begin{proof}[Proof of Theorem~\ref{prop:scalar-flat-noncompact-boundary}]
 As in the proof of Proposition~\ref{prop:scalar-flat-disconnected-boundary}, the associated Einstein manifold $(N,h)$ is complete, Ricci-flat and $H_{\partial N}\geqslant0$. Then Theorem~C in \cite{kasue1983ricci} implies that $\partial N$ is connected and that
\[
    (N,h)\cong\left([0,\infty)\times\partial N,\;dt^2+h_{\partial N}\right).
\]
Since $\partial N=\partial M\times S^1$ it follows that $\partial M$ is connected.

The distance from $\partial N$ is invariant under the circle action, so the splitting is $S^1$-equivariant. Passing to the orbit space gives
\[
    (M,g)\cong\left([0,\infty)\times\partial M,\;dt^2+g_{\partial M}\right).
\]

Let $\{\Phi_s\}_{s\in S^1}$ denote the $S^1$-action on $N$, and let $X$ be its generating Killing field, so that $X_p=\left.\frac{d}{ds}\right|_{s=0}\Phi_s(p)$. The Killing field $X$ is tangent to each slice $\{t\}\times\partial N$ and is independent of the $t$-variable. Indeed, since the $S^1$-action preserves the boundary component $\partial N$ and acts by isometries, it preserves the distance function $t(p)=d_h(p,\partial N)$. Hence $t\circ\Phi_s=t$ for every $s$, and differentiating at $s=0$ gives $X(t)=0$. Thus $X$ is tangent to every slice $\{t\}\times\partial N$. Moreover, the product splitting is given by the normal exponential map $F(t,x)=\exp_x(t\nu_x)$, where $\nu$ is the inward unit normal along $\partial N$. Since $\Phi_s$ preserves both $\partial N$ and its inward unit normal, and isometries commute with the exponential map, $\Phi_s(F(t,x))=F\left(t,\Phi_s|_{\partial N}(x)\right)$. Therefore, in the product coordinates, $\Phi_s(t,x)=\left(t,\Phi_s|_{\partial N}(x)\right)$, so the action is independent of $t$. Consequently, $X_{(t,x)}=(0,X^{\partial N}_x)$, where $X^{\partial N}$ is the restriction of $X$ to $\partial N$, and hence $|X|_h(t,x)=|X^{\partial N}|_{h_{\partial N}}(x)$. Since $V=|X|_h$, it follows that $V(t,x)=V_0(x)$, where $V_0:=V|_{\partial N}=|X^{\partial M}|_{h_{\partial N}}$. 

Since $R_g=0$,~\eqref{traced} yields $\Delta_gV=0$. Because $V$ is independent of $t$, this reduces to $\Delta_{g_{\partial M}}V_0=0$. As $\partial M$ is compact, $V_0$ is constant.

The boundary of the Riemannian product $[0,\infty)\times\partial M$ is totally geodesic.

Finally, arguing as in the end of the proof of Proposition~\ref{prop:scalar-flat-disconnected-boundary}, we conclude that $M$ and $\partial M$ are Ricci-flat.
\end{proof}

\begin{remark}

\begin{enumerate}[(i)]
\item Notice that the upper half-space and a slab between two parallel hyperplanes in Euclidean $n$-space with constant functions as static potential provide examples of noncompact static manifolds with noncompact boundary.
\item Consider compact scalar-flat static manifolds with \textit{connected} boundary and nowhere-vanishing static potentials. By~\eqref{traced} and~\eqref{tracedb}, any static potential in this case is a first Steklov eigenfunction, hence constant. Then $H_g=0$ and the boundary is totally geodesic. We provide an example of an orientable such static manifold.
\begin{example}
Let $\widetilde M=[-1,1]\times T^{n-1}$ be equipped with the flat product metric $\widetilde g=dt^2+g_{T^{n-1}}$. Choose a fixed-point-free isometric involution $a:T^{n-1}\longrightarrow T^{n-1}$ for example, translation by half of a lattice vector, and define $\iota(t,x)=(-t,a(x))$. The map $\iota$ is a free isometric involution of $(\widetilde M,\widetilde g)$. Consider the quotient
\[
(M,g):=(\widetilde M,\widetilde g)/\langle\iota\rangle.
\]

The two boundary components $\{-1\}\times T^{n-1}$ and $\{1\}\times T^{n-1}$are interchanged by $\iota$. Therefore, the quotient manifold $M$ has connected boundary. Notice that for $n\geqslant3$ $M$ is orientable.

Since $\widetilde g$ is flat and the quotient map is a local isometry, the metric $g$ is flat. Hence $\operatorname{Ric}_g=0$, $R_g=0$. Moreover, the slices $\{\pm1\}\times T^{n-1}$ are totally geodesic in the product manifold, and therefore $B_g=0$, $H_g=0$ on $\partial M$.

It is easy to verify that~\eqref{static} and \eqref{staticb} are satisfied for $V=const$.
\end{example}
\end{enumerate}
\end{remark}

\begin{proof}[Proof of Theorem~\ref{prop:negative-scalar-bound}]
Consider the associated Einstein manifold $(N,h)$. Then
\[
    \operatorname{Ric}_h=\frac{R_g}{n-1}h=-n\kappa^2h.
\]
Also, by~\eqref{aemBH}, $H_{\partial N}=-nA$, where $A=-\dfrac{H_g}{n-1}$. Since $(N,h)$ is complete and noncompact, with compact boundary, and since $-\kappa^2<0$ and $A<0$, Theorem~C from~\cite{kasue1983ricci} gives $A\geqslant-\kappa$. Whence $\dfrac{H_g}{n-1}\leqslant\kappa$. Squaring both sides gives
\[
    H_g^2\leqslant(n-1)^2\kappa^2=-\frac{n-1}{n}R_g.
\]

Suppose now that equality holds. Then $H_g=(n-1)\kappa$ and hence $A=-\kappa$. The rigidity statement in Theorem~C in \cite{kasue1983ricci} yields
\[
    (N,h)\cong\left([0,\infty)\times\partial N,\;dt^2+w(t)^2h_{\partial N}\right),
\]
where $w$ is the solution of
\[
    w''-\kappa^2w=0,\qquad w(0)=1,\qquad w'(0)=-\kappa.
\]
Therefore $w(t)=e^{-\kappa t}$ and $h=dt^2+e^{-2\kappa t}h_{\partial N}$

The distance from $\partial N$ is invariant under the $S^1$-action, so the warped-product decomposition is $S^1$-equivariant. Passing to the quotient by the circle action gives $g= dt^2+e^{-2\kappa t}g_{\partial M}$.

Let $X$ be the Killing field generating the $S^1$-action. On $\partial N$ one has
\[
    |X|_{h}=V_0, \qquad V_0=V|_{\partial M}.
\]
Since $h=dt^2+e^{-2\kappa t}h_{\partial N}$ the length of $X$ at $(t,x)$ is
\[
    |X|_h=e^{-\kappa t}|X|_{h_{\partial N}}=e^{-\kappa t}V_0(x).
\]
Since $|X|_h=V$, it follows that $V(t,x)=e^{-\kappa t}V_0(x)$.

Finally, the Ricci tensor of the warped product $h=dt^2+e^{-2\kappa t}h_{\partial N}$ in directions tangent to $\partial N$ is
\[
    \operatorname{Ric}_h|_{T\partial N}=\operatorname{Ric}_{h_{\partial N}}-n\kappa^2e^{-2\kappa t}h_{\partial N}.
\]
On the other hand, $\operatorname{Ric}_h=-n\kappa^2h$, and therefore
\[
    \operatorname{Ric}_h|_{T\partial N}=-n\kappa^2e^{-2\kappa t}h_{\partial N}.
\]
Comparing the two identities gives $\operatorname{Ric}_{h_{\partial N}}=0$. Since $h_{\partial N}=g_{\partial M}+V_0^2d\theta^2$ the associated metric on $\partial M\times S^1$ is Ricci-flat.
\end{proof}

\begin{example}
Let $(\Sigma^{n-1},g_\Sigma)$ be a closed flat manifold, for example a flat torus $T^{n-1}$, and let $a<b$. Consider
\[
M=[a,b]\times\Sigma,\qquad g=dt^2+e^{2t}g_\Sigma,\qquad V=e^t.
\]
Then $M$ is compact, $\partial M$ has two connected components, and $V$ is strictly positive on $M$. One boundary component has mean curvature $n-1$ and the other one $-(n-1)$.

Since $(\Sigma,g_\Sigma)$ is flat, the warped product metric $g$ has constant sectional curvature $-1$. Consequently,
\[
\operatorname{Ric}_g=-(n-1)g,\qquad R_g=-n(n-1)<0.
\]

It is easy to verify that $V$ satisfies both~\eqref{static} and~\eqref{staticb}. Hence $(M,g,V)$ is a compact static manifold with two boundary components with negative scalar curvature and a nowhere-vanishing static potential.
\end{example}



Now we come back to the proof of Theorem~\ref{thm_main}. The second ingredient in it is a corollary from the following lemma.

\begin{lemma}\label{lem_ident}
Let $(M^n,g,V)$, $n\geqslant3$, be a static manifold with boundary such that $V$ does not vanish on a compact boundary component $\partial_iM$. Then
\begin{align}\label{ident_lem}
\int_{\partial_iM}R_\gamma\,d\sigma_g=\operatorname{Vol}_g(\partial_iM)\left(R_g+\frac{n}{n-1}H_i^2\right)+2\int_{\partial_iM}|\nabla^\gamma\log|V||_\gamma^2\,d\sigma_g,
\end{align}
where $\gamma$ is the induced metric on $\partial_iM$.
In particular, when $n=3$,
\[
2\pi\chi(\partial_iM)=\operatorname{Area}_g(\partial_iM)\left(\frac{R_g}{2}+\frac{3}{4}H_i^2\right)+\int_{\partial_iM}|\nabla^\gamma\log|V||_\gamma^2\,d\sigma_g.
\]
\end{lemma}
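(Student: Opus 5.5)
The plan is to derive a pointwise PDE for $V$ restricted to $\partial_iM$ and then integrate it after dividing by $V$. The first ingredient is the Jacobi-operator identity from the proof of Lemma~\ref{prop:positive-scalar-compactness}. It rests on two facts. By \cite[Proposition 1]{cruz2023static}, $\partial_iM$ is totally umbilical wherever $V\neq0$, so $B_i=\frac{H_i}{n-1}\gamma$ on all of $\partial_iM$ and $|B_i|^2=\frac{H_i^2}{n-1}$. Moreover, $H_i$ is constant.

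\textbf{Step 1: a tangential equation for $V$.} Write $\Delta_g V=\Delta_\gamma V+\operatorname{Hess}_gV(\nu,\nu)+H_i\partial_\nu V$ on $\partial_iM$. By \eqref{static}, $\operatorname{Hess}_gV(\nu,\nu)=V\operatorname{Ric}_g(\nu,\nu)+\Delta_gV$. Substituting this gives
\[
\Delta_\gamma V=-V\operatorname{Ric}_g(\nu,\nu)-H_i\partial_\nu V=-V\Bigl(\operatorname{Ric}_g(\nu,\nu)+\frac{H_i^2}{n-1}\Bigr),
\]
where \eqref{tracedb} was used in the second equality. This is exactly $J_{\partial_iM}V=0$.

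\textbf{Step 2: replace $\operatorname{Ric}_g(\nu,\nu)$ via the traced Gauss equation.} The traced Gauss equation reads $R_\gamma=R_g-2\operatorname{Ric}_g(\nu,\nu)+H_i^2-|B_i|^2$. With umbilicity this gives
\[
2\operatorname{Ric}_g(\nu,\nu)=R_g-R_\gamma+\frac{n-2}{n-1}H_i^2.
\]
Inserting this into Step 1 yields
\[
\Delta_\gamma V=\frac12 R_\gamma V-\frac12\Bigl(R_g+\frac{n}{n-1}H_i^2\Bigr)V,
\]
because $\frac{n-2}{n-1}+\frac{2}{n-1}=\frac{n}{n-1}$. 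This matches the displayed computation in the proof of Lemma~\ref{prop:positive-scalar-compactness}.

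\textbf{Step 3: integrate.} Since $V$ has no zeros on the compact component $\partial_iM$, divide by $V$ and use
\[
\frac{\Delta_\gamma V}{V}=\Delta_\gamma\log|V|+|\nabla^\gamma\log|V||_\gamma^2.
\]
Integrating over the closed manifold $\partial_iM$ kills the divergence term. What remains is
\[
\int_{\partial_iM}|\nabla^\gamma\log|V||^2=\frac12\int_{\partial_iM}R_\gamma-\frac12\Bigl(R_g+\frac{n}{n-1}H_i^2\Bigr)\operatorname{Vol}_g(\partial_iM).
\]
Here constancy of $R_g$ and $H_i$ is what lets the second term factor out. Multiplying by $2$ gives \eqref{ident_lem}. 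For $n=3$, apply Gauss--Bonnet, $\int R_\gamma=2\int K=4\pi\chi$, and divide by $2$.

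The main obstacle is keeping the signs consistent in Steps 1–2: the outward-normal convention in the Laplacian decomposition and in Gauss's equation, and the value $|B_i|^2=H_i^2/(n-1)$. Umbilicity itself is already available from the cited proposition.
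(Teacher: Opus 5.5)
Your proof is correct, but it takes a different route from the paper. Both arguments reach the same pointwise identity $R_\gamma=R_g+\frac{n}{n-1}H_i^2+2\Delta_\gamma V/V$ on $\partial_iM$, and the integration step after that is identical.

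\textbf{The paper's route.} It never writes down $\operatorname{Ric}_g(\nu,\nu)$. Instead it passes to the boundary component $\partial_iN=\partial_iM\times S^1$ of the associated Einstein manifold, with induced metric $\gamma+V^2d\theta^2$. It then combines two facts:
\begin{enumerate}[(i)]
\item the warped-product formula $R_{\partial_iN}=R_\gamma-2\Delta_\gamma V/V$;
\item the traced Gauss equation for $\partial_iN\subset(N,h)$. There the Einstein condition makes $R_h-2\operatorname{Ric}_h(\nu,\nu)=R_g$ constant, and the umbilicity \eqref{aemBH} gives $H_{\partial_iN}^2-|B_{\partial_iN}|^2=\frac{n}{n-1}H_i^2$.
\end{enumerate}

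\textbf{Your route.} You apply the Gauss equation to $\partial_iM\subset(M,g)$ itself. You then eliminate the non-constant term $\operatorname{Ric}_g(\nu,\nu)$ using the normal component of the static equation together with the Robin condition, which is exactly $J_{\partial_iM}V=0$.

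\textbf{What each buys.} Your version stays on $M$, needs no warped-product curvature formula, and makes explicit the link to the Jacobi operator used in Lemma~\ref{prop:positive-scalar-compactness} and in the dimension result for $\mathcal P$. It also shows that $J_{\partial_iM}V=0$, divided by $V$ and integrated, already yields the full identity. Remark~\ref{rem_jacobi} attributes only the inequality \eqref{ineq_stab} to the Jacobi-operator viewpoint. The paper's version instead gives the geometric interpretation that $\partial_iN$ has constant scalar curvature $R_g+\frac{n}{n-1}H_i^2$, in keeping with its associated-Einstein-manifold theme.
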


\begin{proof}
The induced metric on the boundary component $\partial_iN$ corresponding to $\partial_iM$ is
\[
h_{\partial_iN}=\gamma+V^2d\theta^2.
\]
Since the fibre is one-dimensional, the scalar curvature of this warped product is
\[
R_{\partial_iN}=R_\gamma-2\frac{\Delta_\gamma V}{V}.
\]

Using the contracted Gauss equation, umbilicity of $\partial_iN$ in $(N,h)$ and formulas from Section~\ref{AEM}, we obtain
\[
R_{\partial_iN}=R_g+\frac{n}{n-1}H_i^2.
\]
Combining this with the warped-product scalar-curvature formula yields
\[
R_\gamma=\left(R_g+\frac{n}{n-1}H_i^2\right)+2\frac{\Delta_\gamma V}{V}.
\]

Since $V$ does not vanish on the connected component $\partial_iM$, it has a fixed sign there, and hence
\[
\nabla^\gamma\log|V|=\frac{\nabla^\gamma V}{V}.
\]
Integrating over $\partial_iM$ gives
\[
\int_{\partial_iM}R_\gamma\,d\sigma_g=\operatorname{Area}_g(\partial_iM)\left(R_g+\frac{n}{n-1}H_i^2\right)+2\int_{\partial_iM}\frac{\Delta_\gamma V}{V}\,d\sigma_g.
\]
Since $\partial_iM$ is closed,
\[
0=\int_{\partial_iM}\operatorname{div}_\gamma\left(\frac{\nabla^\gamma V}{V}\right)d\sigma_g=\int_{\partial_iM}\left(\frac{\Delta_\gamma V}{V}-\frac{|\nabla^\gamma V|_\gamma^2}{V^2}\right)d\sigma_g.
\]
Thus
\[
\int_{\partial_iM}\frac{\Delta_\gamma V}{V}\,d\sigma_g=\int_{\partial_iM}|\nabla^\gamma\log|V||_\gamma^2\,d\sigma_g.
\]
Substituting this identity gives
\[
\int_{\partial_iM}R_\gamma\,d\sigma_g=\operatorname{Area}_g(\partial_iM)\left(R_g+\frac{n}{n-1}H_i^2\right)+2\int_{\partial_iM}|\nabla^\gamma\log|V||_\gamma^2\,d\sigma_g.
\]

When $n=3$, one has $R_\gamma=2K_\gamma$. Hence, applying the Gauss--Bonnet theorem, we obtain
\[
2\pi\chi(\partial_iM)=\operatorname{Area}_g(\partial_iM)\left(\frac{R_g}{2}+\frac{3}{4}H_i^2\right)+\int_{\partial_iM}|\nabla^\gamma\log|V||_\gamma^2\,d\sigma_g.
\]
\end{proof}

\begin{remark}\label{rem_jacobi}
Lemma~\ref{lem_ident} implies
\begin{align}\label{ineq_stab}
\int_{\partial_iM}R_\gamma\,d\sigma_g\geqslant\operatorname{Vol}_g(\partial_iM)\left(R_g+\frac{n}{n-1}H_i^2\right),
\end{align}
with equality if and only if $V$ is constant on $\partial_iM$.

It is worth noting that inequality~\eqref{ineq_stab} can also be obtained from the fact that, if $V$ does not vanish on $\partial_iM$, then $V|_{\partial_iM}$ is a first eigenfunction of the Jacobi operator of $\partial_iM$ with eigenvalue $0$; see the argument at the end of the proof of Lemma~\ref{prop:positive-scalar-compactness}. Lemma~\ref{lem_ident}, however, is stronger, since it yields the corresponding identity.

It is also worth noting that, under the hypotheses of the following proposition, the volumes of $M$ and $\partial M$ admit upper bounds depending only on $R_g$.

\begin{proposition}\label{vol_area}
Let $(M^n,g,V)$, $n\geqslant3$, be a compact simple static manifold with boundary. Suppose that $R_g>0$ and $V$ does not vanish on $M$. Then 
\[ 
\operatorname{Vol}_g(M)<\frac{-H_g}{R_g\left(R_g+\dfrac{n}{n-1}H_g^2\right)}\int_{\partial M}R_\gamma\,d\sigma_g\leqslant\frac{1}{2R_g^{3/2}}\sqrt{\frac{n-1}{n}}\int_{\partial M}R_\gamma\,d\sigma_g 
\] 
and 
\[ 
\operatorname{Vol}_g(\partial M)\leqslant\frac{1}{R_g+\dfrac{n}{n-1}H_g^2}\int_{\partial M}R_\gamma\,d\sigma_g<\frac{1}{R_g}\int_{\partial M}R_\gamma\,d\sigma_g. 
\] 
In particular, when $n=3$ and $\partial M$ is orientable
\[ 
\operatorname{Vol}_g(M)<\frac{8\pi(-H_g)}{R_g\left(R_g+\dfrac32H_g^2\right)}\leqslant\frac{8\pi}{\sqrt6\,R_g^{3/2}} \quad \text{and}\quad \operatorname{Area}_g(\partial M)\leqslant\frac{8\pi}{R_g+\dfrac32H_g^2}<\frac{8\pi}{R_g}.
\] 
\end{proposition}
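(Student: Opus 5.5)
The plan is to combine an integrated identity for $\log V$ on $M$ with Lemma~\ref{lem_ident} on the (single) boundary component, and then optimize in $H_g$.

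\emph{Sign of $H_g$ and choice of sign of $V$.} Since $M$ is connected and $V$ does not vanish, we may replace $V$ by $-V$ and assume $V>0$. The manifold is compact and simple, so \eqref{static_ident} gives $-R_g\int_MV\,d\mu_g=H_g\int_{\partial M}V\,d\sigma_g$. Together with $R_g>0$ this forces $H_g<0$.

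\emph{First key identity.} Integrate $\Delta_g\log V$ over $M$. By \eqref{traced},
\[
\Delta_g\log V=\frac{\Delta_gV}{V}-|\nabla^g\log V|_g^2=-\frac{R_g}{n-1}-|\nabla^g\log V|_g^2 .
\]
By the Robin condition \eqref{tracedb}, the boundary flux is $\int_{\partial M}\partial_\nu V/V\,d\sigma_g=\frac{H_g}{n-1}\operatorname{Vol}_g(\partial M)$. The divergence theorem then yields
\[
\operatorname{Vol}_g(M)=-\frac{H_g}{R_g}\operatorname{Vol}_g(\partial M)-\frac{n-1}{R_g}\int_M|\nabla^g\log V|_g^2\,d\mu_g .
\]
The last integral is strictly positive: if $V$ were constant, then $\Delta_gV=0$, contradicting $\Delta_gV=-\frac{R_g}{n-1}V\neq0$. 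Hence $\operatorname{Vol}_g(M)<-\frac{H_g}{R_g}\operatorname{Vol}_g(\partial M)$.

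\emph{Boundary estimate.} Apply Lemma~\ref{lem_ident} to the connected compact boundary and drop the nonnegative gradient term, as in \eqref{ineq_stab}:
\[
\operatorname{Vol}_g(\partial M)\left(R_g+\tfrac{n}{n-1}H_g^2\right)\leqslant\int_{\partial M}R_\gamma\,d\sigma_g .
\]
This is the first inequality for $\operatorname{Vol}_g(\partial M)$. The left side is positive, so $\int_{\partial M}R_\gamma\,d\sigma_g>0$. Since $H_g\neq0$ we have $R_g+\frac{n}{n-1}H_g^2>R_g$, which gives the strict bound $\frac{1}{R_g}\int_{\partial M}R_\gamma\,d\sigma_g$. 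Substituting the boundary estimate into the volume inequality of the previous step gives the first bound on $\operatorname{Vol}_g(M)$.

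\emph{Optimization in $H_g$.} For $x=-H_g>0$, the elementary AM--GM inequality $R_g+\frac{n}{n-1}x^2\geqslant 2x\sqrt{\frac{nR_g}{n-1}}$ gives
\[
\frac{x}{R_g+\frac{n}{n-1}x^2}\leqslant\frac12\sqrt{\frac{n-1}{nR_g}} .
\]
Multiplying by $\frac1{R_g}$ gives the bound with $\frac{1}{2R_g^{3/2}}\sqrt{\frac{n-1}{n}}$.

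\emph{The case $n=3$.} Here $R_\gamma=2K_\gamma$, and Gauss--Bonnet gives $\int_{\partial M}R_\gamma\,d\sigma_g=4\pi\chi(\partial M)$. For a connected orientable closed surface, $\chi(\partial M)\leqslant 2$, so this integral is at most $8\pi$. Substituting yields the stated constants; note that $\frac12\sqrt{2/3}\cdot 8\pi=\frac{8\pi}{\sqrt6}$.

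The only genuinely new step is the $\log V$ identity; everything else is bookkeeping from earlier results. The point needing care there is strictness, which I expect to follow from the nonconstancy of $V$ as argued above.
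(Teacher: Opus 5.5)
Your proposal is correct. Apart from one step it follows the paper: the boundary bound from Lemma~\ref{lem_ident}, the AM--GM optimization in $H_g$, and Gauss--Bonnet for $n=3$ are exactly the paper's steps. You use $\chi(\partial M)\leqslant 2$ where the paper deduces $\partial M\cong S^2$, which makes no difference for upper bounds.

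The genuine difference is how you obtain $\operatorname{Vol}_g(M)<-\frac{H_g}{R_g}\operatorname{Vol}_g(\partial M)$. The paper calls $V$ the first eigenfunction of the Robin-type problem $-\Delta_g u-\frac{R_g}{n-1}u=0$, $\partial_\nu u=\sigma u$. It then uses the Rayleigh-quotient characterization of $\frac{H_g}{n-1}$ and tests with $\phi\equiv1$; strictness holds because $1$ cannot be a minimizer when $R_g>0$. You instead integrate $\Delta_g\log V$ and obtain the exact identity
\[
\operatorname{Vol}_g(M)=-\frac{H_g}{R_g}\operatorname{Vol}_g(\partial M)-\frac{n-1}{R_g}\int_M|\nabla^g\log V|_g^2\,d\mu_g .
\]

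The two arguments are the same computation in different form. Under the ground-state substitution $\phi=V\psi$, one has
\[
\int_M|\nabla^g\phi|_g^2\,d\mu_g-\frac{R_g}{n-1}\int_M\phi^2\,d\mu_g-\frac{H_g}{n-1}\int_{\partial M}\phi^2\,d\sigma_g=\int_MV^2|\nabla^g\psi|_g^2\,d\mu_g ,
\]
and the choice $\psi=1/V$ (i.e.\ $\phi\equiv1$) gives exactly your gradient term.

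Your route is more elementary and self-contained in three ways. It avoids the variational characterization, which tacitly requires $-\Delta_g-\frac{R_g}{n-1}$ to be positive on functions vanishing on $\partial M$; that holds here because $V>0$, but the paper does not address it. It makes strictness transparent, since it reduces to $V$ being nonconstant. And it gives an explicit formula for the gap in the inequality, which the paper's test-function argument does not.
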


\begin{proof}
By~\eqref{lem_ident}, 
\[ 
\operatorname{Vol}_g(\partial M)\leqslant\frac{1}{R_g+\dfrac{n}{n-1}H_g^2}\int_{\partial M}R_\gamma\,d\sigma_g<\frac{1}{R_g}\int_{\partial M}R_\gamma\,d\sigma_g. 
\] 
On the other hand, by~\eqref{traced} and~\eqref{tracedb}, $V$ is the first eigenfunction of 
\[ 
\begin{cases}
 -\Delta_gV-\dfrac{R_g}{n-1}V=0 & \text{in }M,\\ 
\partial_\nu V=\dfrac{H_g}{n-1}V & \text{on }\partial M. 
\end{cases} 
\] 
Hence, by the variational characterization, 
\[ 
\frac{H_g}{n-1}=\inf_{\phi|_{\partial M}\not\equiv0}\frac{\displaystyle\int_M|\nabla^g\phi|_g^2\,d\mu_g-\dfrac{R_g}{n-1}\int_M\phi^2\,d\mu_g}{\displaystyle\int_{\partial M}\phi^2\,d\sigma_g}. 
\] 
Testing with $\phi\equiv1$ gives, strictly since $R_g>0$,
\[ 
\frac{\operatorname{Vol}_g(M)}{\operatorname{Vol}_g(\partial M)}<-\frac{H_g}{R_g}. 
\] 
Therefore 
\[ 
\operatorname{Vol}_g(M)<\frac{-H_g}{R_g\left(R_g+\dfrac{n}{n-1}H_g^2\right)}\int_{\partial M}R_\gamma\,d\sigma_g. 
\] 
Since 
\[ 
\frac{x}{R_g\left(R_g+\dfrac{n}{n-1}x^2\right)}\leqslant\frac{1}{2R_g^{3/2}}\sqrt{\frac{n-1}{n}},\qquad x\geqslant0, 
\] 
the second volume estimate follows. 

When $n=3$, the boundary identity and $R_g>0$ imply $\chi(\partial M)>0$. If $\partial M$ is connected and orientable, \( \partial M\cong S^2, \) and hence 
\[ 
\int_{\partial M}R_\gamma\,d\sigma_g=8\pi. 
\] 
Substituting this into the preceding estimates gives the stated three-dimensional inequalities.
\end{proof}
\end{remark}

Lemma~\ref{lem_ident} admits the following corollary, which is another ingredient in the proof of Theorem~\ref{thm_main}.

\begin{corollary}\label{thm_simple}
Let $(M^n,g,V)$, $n\geqslant3$, be a compact static manifold with connected boundary such that $V$ does not vanish on $M$. One has
\[
\int_{\partial M}R_\gamma\,d\sigma_g=\left[R_g+\frac{nR_g^2}{n-1}\left(\frac{\displaystyle\int_MV\,d\mu_g}{\displaystyle\int_{\partial M}V\,d\sigma_g}\right)^2\right]\operatorname{Vol}_g(\partial M)+2\int_{\partial M}|\nabla^\gamma\log V|_\gamma^2\,d\sigma_g.
\]
In particular, when $n=3$,
\[
2\pi\chi(\partial M)=\left[\frac{R_g}{2}+\frac{3R_g^2}{4}\left(\frac{\displaystyle\int_MV\,d\mu_g}{\displaystyle\int_{\partial M}V\,d\sigma_g}\right)^2\right]\operatorname{Area}_g(\partial M)+\int_{\partial M}|\nabla^\gamma\log V|_\gamma^2\,d\sigma_g.
\]
\end{corollary}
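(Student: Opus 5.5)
The plan is to combine Lemma~\ref{lem_ident}, applied to the single boundary component, with the integral identity~\eqref{static_ident}. The latter lets us express $H_g$ through $R_g$ and the ratio of the two integrals of $V$.

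First, since $M$ is compact, so is $\partial M$. The boundary is connected by assumption and $V$ does not vanish on it, so Lemma~\ref{lem_ident} applies with $\partial_iM=\partial M$. This gives
\[
\int_{\partial M}R_\gamma\,d\sigma_g=\operatorname{Vol}_g(\partial M)\left(R_g+\frac{n}{n-1}H_g^2\right)+2\int_{\partial M}|\nabla^\gamma\log|V||_\gamma^2\,d\sigma_g .
\]
Because $M$ is connected (implicitly, as $R_g$ is a single constant) and $V$ never vanishes, $V$ has a fixed sign. Replacing $V$ by $-V$ if needed, which preserves the static equations and the ratio of integrals, we may take $V>0$ and write $\log V$ in place of $\log|V|$.

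Second, we eliminate $H_g^2$. Since $M$ is compact with connected boundary, \eqref{static_ident} reads $-R_g\int_MV\,d\mu_g=H_g\int_{\partial M}V\,d\sigma_g$. Here $\int_{\partial M}V\,d\sigma_g>0$, because $V>0$ and $\partial M\neq\emptyset$. Hence
\[
H_g=-R_g\,\frac{\int_MV\,d\mu_g}{\int_{\partial M}V\,d\sigma_g},
\qquad
H_g^2=R_g^2\left(\frac{\int_MV\,d\mu_g}{\int_{\partial M}V\,d\sigma_g}\right)^2 .
\]
Substituting this into the previous display gives the general formula of the corollary.

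Finally, for $n=3$ we have $R_\gamma=2K_\gamma$, so Gauss--Bonnet gives $\int_{\partial M}R_\gamma\,d\sigma_g=4\pi\chi(\partial M)$. Dividing by $2$ yields the stated three-dimensional identity, with coefficients $\tfrac{R_g}{2}$ and $\tfrac{3R_g^2}{4}$.

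There is no real obstacle, since both ingredients are already established. The only points needing care are justifying the sign normalization of $V$, and checking that the denominator $\int_{\partial M}V\,d\sigma_g$ is nonzero. Both follow from $V$ being nowhere vanishing on the connected manifold $M$.
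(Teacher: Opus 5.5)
Your proposal is correct and follows the paper's argument exactly: apply Lemma~\ref{lem_ident} to the single boundary component, use \eqref{static_ident} to write $H_g=-R_g\int_MV\,d\mu_g/\int_{\partial M}V\,d\sigma_g$, substitute, and obtain the $n=3$ case from Gauss--Bonnet. Your remarks on normalizing $V>0$ and on $\int_{\partial M}V\,d\sigma_g\neq0$ are sound details that the paper leaves implicit.
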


\begin{proof}
By Lemma~\ref{lem_ident},
$$
\int_{\partial M}R_\gamma\,d\sigma_g=\left(R_g+\frac{n}{n-1}H_g^2\right)\operatorname{Area}_g(\partial M)+2\int_{\partial M}|\nabla^\gamma\log V|_\gamma^2\,d\sigma_g.
$$
Identity~\eqref{static_ident} gives
$$
H_g=-R_g\frac{\displaystyle\int_MV\,d\mu_g}{\displaystyle\int_{\partial M}V\,d\sigma_g}.
$$
Substituting this into the previous identity yields
$$
\int_{\partial M}R_\gamma\,d\sigma_g=\left[R_g+\frac{nR_g^2}{n-1}\left(\frac{\displaystyle\int_MV\,d\mu_g}{\displaystyle\int_{\partial M}V\,d\sigma_g}\right)^2\right]\operatorname{Area}_g(\partial M)+2\int_{\partial M}|\nabla^\gamma\log V|_\gamma^2\,d\sigma_g.
$$
\end{proof}

Now we ready to prove Theorem~\ref{thm_main}.

\begin{proof}[Proof of Theorem~\ref{thm_main}]

First, we show that $R_g>0$.  Replacing $V$ by $-V$ if necessary, we may assume that $V>0$ on $M$. Using identity~\eqref{static_ident} and the connectedness of $\partial M$, identity~\eqref{med} can be written as
\[
\int_MV|\mathring{\operatorname{Ric}}_g|_g^2\,d\mu_g=R_g\left(\frac{H_g^2}{(n-1)^2}+\frac{R_g}{n(n-1)}\right)\int_MV\,d\mu_g.
\]
By the assumption $R_g>-\dfrac{n}{n-1}H_g^2$, the factor in parentheses is positive. Since the left-hand side is nonnegative, it follows that $R_g\geqslant0$. If $R_g=0$, then identity~\eqref{static_ident} gives $H_g=0$, contradicting the strict inequality above. Hence $R_g>0$.

By Theorem~\ref{prop:positive-scalar-compactness}, $(M,g)$ is compact. Since $(M,g,V)$ is simple and $V>0$, identity~\eqref{static_ident} then implies $H_g<0$. The remaining assertions follow directly from Corollary~\ref{thm_simple}, while the estimate for $\operatorname{Vol}_g(\partial M)$ follows by the same argument as in Proposition~\ref{vol_area}.
\end{proof}

We conclude this section with an observation on how the fact that a static potential is an eigenfunction of the Jacobi operator restricts the geometry of a static manifold with boundary. Recall that $\mathcal P$ denotes the vector space of static potentials on $(M,g)$.

\begin{theorem}
Let $(M^n,g)$, $n\geqslant3$, be connected and let $\partial_iM$ be a compact connected boundary component. Suppose that $\dim\mathcal P\geqslant 1$. If there exists a static potential which does not vanish on $\partial_iM$, then $\dim\mathcal P=1$.
\end{theorem}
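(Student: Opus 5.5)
The plan is to use the Jacobi operator observation from the end of the proof of Lemma~\ref{prop:positive-scalar-compactness}: the restriction of any static potential to $\partial_iM$ lies in the kernel of the Jacobi operator $J_{\partial_iM}$. Let $V\in\mathcal P$ be nonvanishing on $\partial_iM$, and let $f\in\mathcal P$ be arbitrary. We will show that $f$ is a constant multiple of $V$.

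\emph{Step 1: $V|_{\partial_iM}$ is a first eigenfunction.} On $\partial_iM$ the boundary is umbilic because $V\neq0$ there, and by \cite[Proposition 1]{cruz2023static} $H_i$ is constant. The computation at the end of the proof of Lemma~\ref{prop:positive-scalar-compactness} (contracted Gauss equation, the static equation evaluated on $(\nu,\nu)$, and \eqref{tracedb}) gives $J_{\partial_iM}V=0$. This computation is valid in all dimensions $n\geqslant3$. Since $V$ has a fixed sign on the compact connected $\partial_iM$, the eigenvalue $0$ is the first eigenvalue, and it is simple.

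\emph{Step 2: $f$ restricts to a multiple of $V$.} Here is the main subtlety. The same computation for $f$ used $\partial_\nu f\,g_{\partial M}=fB_g$ together with the umbilicity $B_g=\frac{H_i}{n-1}g_{\partial M}$, and umbilicity already holds on $\partial_iM$ thanks to $V$. Hence $\partial_\nu f=\frac{H_i}{n-1}f$, and the identity $\mathrm{Hess}_gf(\nu,\nu)=f(\operatorname{Ric}_g(\nu,\nu)-\frac{R_g}{n-1})$ together with $\Delta_g f=\Delta_{\partial_iM}f+\mathrm{Hess}_gf(\nu,\nu)+H_i\partial_\nu f$ yields $J_{\partial_iM}f=0$ exactly as for $V$. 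By simplicity of the first eigenvalue, $f|_{\partial_iM}=cV|_{\partial_iM}$ for some constant $c$.

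\emph{Step 3: unique continuation.} Set $u=f-cV\in\mathcal P$. Then $u=0$ on $\partial_iM$ and, by the Robin condition, $\partial_\nu u=\frac{H_i}{n-1}u=0$ there, so $u$ has vanishing Cauchy data on $\partial_iM$. Since $u$ satisfies the second order linear system $\mathrm{Hess}_gu=u(\operatorname{Ric}_g-\frac{R_g}{n-1}g)$, along any geodesic $\gamma$ the function $u\circ\gamma$ solves a linear ODE $(u\circ\gamma)''=a(s)\,u\circ\gamma$. Geodesics starting normally from $\partial_iM$ with zero initial data therefore force $u\equiv0$ in a collar. More generally, the pair $(u,\nabla u)$ is parallel for a connection on $\underline{\mathbb R}\oplus TM$, so vanishing at one point propagates along paths. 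Connectedness of $M$ then gives $u\equiv0$. Hence $f=cV$ and $\dim\mathcal P=1$.

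The main obstacle is Step 2: one must make sure that the Jacobi identity for an arbitrary $f\in\mathcal P$ does not require $f\neq0$. It only requires the umbilicity of $\partial_iM$, which $V$ already supplies.
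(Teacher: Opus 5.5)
Your proposal is correct and follows the paper's argument. Simplicity of the first eigenvalue of $J_{\partial_iM}$ gives $f=cV$ on $\partial_iM$, and the zero Cauchy data of $f-cV$ then propagates to all of $M$. The only differences are these: you justify explicitly that $J_{\partial_iM}f=0$ also for potentials that may vanish on $\partial_iM$, using the umbilicity supplied by $V$ (the paper leaves this implicit). You also close with the parallel-section argument for $(u,\nabla u)$ instead of elliptic unique continuation for $\Delta_gU=-\frac{R_g}{n-1}U$, which is slightly more elementary and makes the collar step unnecessary.
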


This theorem strengthens Theorem 1.5 from~\cite{medvedev2026dimension}. Moreover, its proof is considerably simpler than that of Theorem 1.5.

\begin{proof}
Let $V$ be a static potential which does not vanish on $\partial_iM$. As we observe in the proof of Lemma~\ref{prop:positive-scalar-compactness}, every static potential $W$ is a first eigenfunction of the Jacobi operator for $\partial_iM$. Since the first eigenspace is one-dimensional, for every other static potential $W$ there exists a constant $c$ such that $W=cV$ on $\partial_iM$.

Set $U=W-cV$. Then $U=0$ on $\partial_iM$. Since both $W$ and $V$ satisfy the Robin boundary condition~\eqref{tracedb},
\[
\partial_\nu U=\frac{H_i}{n-1}U=0
\]
on $\partial_iM$.

Moreover, the static equation~\eqref{static} gives
\[
\operatorname{Hess}_gU
=
U\left(\operatorname{Ric}_g-\frac{R_g}{n-1}g\right).
\]
Let $\eta$ be an inward unit-speed geodesic normal to $\partial_iM$. Then
\[
\frac{d^2}{dt^2}U(\eta(t))
=
\left(\operatorname{Ric}_g(\dot\eta,\dot\eta)-\frac{R_g}{n-1}\right)U(\eta(t)).
\]
The initial conditions are
\[
U(\eta(0))=0,
\qquad
\frac{d}{dt}U(\eta(0))=0.
\]
Hence uniqueness for the corresponding ordinary differential equation implies that $U$ vanishes in a collar neighbourhood of $\partial_iM$.

Finally,
\[
\Delta_gU=-\frac{R_g}{n-1}U.
\]
By unique continuation, $U\equiv0$ on $M$. Therefore
\[
W=cV
\]
on $M$, proving the assertion.
\end{proof}

\begin{corollary} Let $(M^n,g)$, $n\geqslant3$, be a connected static manifold with boundary. If $\dim\mathcal P\geqslant2,$ then for every nonzero $V\in\mathcal P$ and every boundary component $\partial_iM$, $V^{-1}(0)\cap\partial_iM\neq\varnothing$.  
\end{corollary}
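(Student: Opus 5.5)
This is the contrapositive of the preceding theorem. Let $(M,g)$ be connected with $\dim\mathcal P\geqslant2$, and take a nonzero $V\in\mathcal P$ and a boundary component $\partial_iM$. Suppose, for contradiction, that $V^{-1}(0)\cap\partial_iM=\varnothing$, so that $V$ does not vanish on $\partial_iM$. We then want to apply the preceding theorem with this $\partial_iM$ and this $V$. Its conclusion $\dim\mathcal P=1$ contradicts $\dim\mathcal P\geqslant2$, which proves the corollary.

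The only issue is a hypothesis mismatch: the theorem requires $\partial_iM$ to be compact, while the corollary allows arbitrary components. I would handle this as follows.

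\begin{enumerate}[(i)]
\item \emph{If $\partial_iM$ is compact}, the theorem applies directly.
\item \emph{If $\partial_iM$ is noncompact}, the theorem cannot be quoted as stated, because its proof uses that the Jacobi operator of $\partial_iM$ has a simple first eigenvalue. Instead I would argue as follows.
  \begin{itemize}
  \item By the computation at the end of the proof of Lemma~\ref{prop:positive-scalar-compactness}, every $W\in\mathcal P$ satisfies $J_{\partial_iM}W=0$.
  \item Since $V|_{\partial_iM}$ is a positive Jacobi field (after a sign change), set $u=W/V$ on $\partial_iM$. A standard computation converts $J_{\partial_iM}W=0$ into
  \[
  \operatorname{div}_\gamma\left(V^2\nabla^\gamma u\right)=0 .
  \]
  \item On a compact component, multiplying by $u$ and integrating by parts forces $u$ to be constant. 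On a noncompact one this fails in general, so one would need extra input: either an a priori growth control, or a restriction of the corollary to compact components.
  \end{itemize}
\end{enumerate}

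Once $W=cV$ holds on $\partial_iM$, the rest of the argument is identical to the proof of the preceding theorem. The function $U=W-cV$ has vanishing Cauchy data on $\partial_iM$, hence vanishes along normal geodesics by ODE uniqueness, and then everywhere by unique continuation. This gives $\dim\mathcal P=1$.

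So the main obstacle is precisely the compactness of $\partial_iM$. I would state the corollary for compact boundary components, as the preceding theorem implicitly requires. For noncompact components I would only attempt the Liouville-type argument for $\operatorname{div}(V^2\nabla u)=0$ under additional assumptions, such as parabolicity of $(\partial_iM,V^2\gamma)$ or $u$ bounded together with a volume-growth condition.
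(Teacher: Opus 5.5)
Your argument is the paper's: the corollary is stated there without a separate proof, as the contrapositive of the preceding theorem. That theorem only covers compact $\partial_iM$, so restricting to compact components is exactly the right reading.

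Your doubt about noncompact components is fully justified, because the statement is false there. Take the flat half-space $\{x_n\geqslant0\}\subset\mathbb R^n$, which the paper itself mentions in its remark. Since $\operatorname{Ric}_g=0$ and $B_g=0$, every $f\in\mathcal P$ satisfies $\operatorname{Hess}_gf=0$ in $M$ and $\partial_{x_n}f=0$ on $\partial M$. Hence $\mathcal P=\{a+b_1x_1+\dots+b_{n-1}x_{n-1}\}$ has dimension $n\geqslant3$, while $V\equiv1$ never vanishes on the single noncompact boundary component.

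The same example shows that parabolicity alone cannot rescue your Liouville step. For $n=3$, $u=x_1$ is a nonconstant solution of $\operatorname{div}_\gamma(V^2\nabla^\gamma u)=0$ on the parabolic plane. So a one-sided bound on $u$ (as in your second alternative) is genuinely needed.
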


\section{Appendix}\label{appendix}

\subsection{Some identities}\label{identities}

In this section we derive some new identities for static manifolds with boundary admitting a nowhere-vanishing static potential. These formulas are quite simple to derive but we did not find them in the existing literature. We hope that they will be useful in the subsequent study of static manifolds with boundary admitting nowhere-vanishing potential. These identities can be used  for static manifolds without boundary if we omit boundary terms. 

\begin{proposition}\label{thm:bochner-type-identity}
Let $(M^n,g,V)$, $n\geqslant3$, be a compact connected static manifold with nonempty boundary, and suppose that the static potential does not vanish on $M$. Then
\[
    2\int_M\frac{|\nabla^g V|_g^2}{V^3}\,d\mu_g+\frac{R_g}{n-1}\int_M\frac{1}{V}\,d\mu_g=-\frac{1}{n-1}\sum_{i=1}^kH_i\int_{\partial_iM}\frac{1}{V}\,d\sigma_g.
\]
\end{proposition}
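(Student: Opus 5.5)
The identity should follow by integrating the Laplacian of the auxiliary function $1/V$ over $M$ and applying the divergence theorem. This is the same mechanism that produced~\eqref{static_ident}, now applied to $1/V$ instead of $V$. Since $V$ does not vanish on $M$ and $M$ is connected, $V$ has a fixed sign, so $1/V$ is smooth on $M$ up to the boundary. Compactness of $M$ and $\partial M$ makes all the integrals finite and justifies Stokes' theorem. No sign normalization of $V$ is needed: every term in the claimed identity is odd in $V$, so replacing $V$ by $-V$ changes both sides by the same overall sign.

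First, I compute pointwise in the interior. We have $\nabla^g(1/V)=-V^{-2}\nabla^gV$. Taking the divergence gives
\[
\Delta_g\Bigl(\frac1V\Bigr)=-\frac{\Delta_gV}{V^2}+\frac{2|\nabla^gV|_g^2}{V^3}.
\]
Substituting the traced static equation~\eqref{traced}, $\Delta_gV=-\frac{R_g}{n-1}V$, this becomes
\[
\Delta_g\Bigl(\frac1V\Bigr)=\frac{R_g}{n-1}\,\frac1V+\frac{2|\nabla^gV|_g^2}{V^3}.
\]
Integrating over $M$, the left-hand side of the proposition is exactly $\int_M\Delta_g(1/V)\,d\mu_g$.

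Second, I evaluate the boundary term. By the divergence theorem, with $\nu$ the outward normal,
\[
\int_M\Delta_g(1/V)\,d\mu_g=\sum_{i=1}^k\int_{\partial_iM}\partial_\nu(1/V)\,d\sigma_g=-\sum_{i=1}^k\int_{\partial_iM}\frac{\partial_\nu V}{V^2}\,d\sigma_g.
\]
I then use the Robin condition~\eqref{tracedb}, $\partial_\nu V=\frac{H_i}{n-1}V$ on $\partial_iM$, where $H_i$ is constant on each component by the result of~\cite{cruz2023static} recalled in the introduction. This turns the boundary integrand into $-\frac{H_i}{n-1}\,\frac1V$, and pulling the constant $H_i$ out of each integral gives exactly the right-hand side.

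There is no real obstacle here. The only points needing care are the smoothness of $1/V$ up to the boundary, which is ensured by nonvanishing of $V$ on the compact set $M$, and the constancy of $H_i$ on each boundary component, which lets us write $H_i$ outside the integral. The rest is a routine two-line computation.
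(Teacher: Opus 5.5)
Your proposal is correct and follows essentially the same route as the paper. You compute $\Delta_g(1/V)$ using~\eqref{traced}, derive the Robin-type boundary condition for $1/V$ from~\eqref{tracedb}, and integrate with the divergence theorem, using constancy of $H_i$ on each component to pull it out of the boundary integrals.
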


\begin{proof}
Since $M$ is connected and $V$ does not vanish on $M$, the function $V^{-1}$ is smooth on $M$. Using~\eqref{traced}, we compute
\[
\Delta_g\left(\frac{1}{V}\right)=-\frac{\Delta_gV}{V^2}+2\frac{|\nabla^gV|_g^2}{V^3}=\frac{R_g}{n-1}\frac{1}{V}+2\frac{|\nabla^gV|_g^2}{V^3}.
\]
On each boundary component $\partial_iM$, the static boundary condition~\eqref{tracedb} gives
\[
\partial_\nu\left(\frac{1}{V}\right)=-\frac{\partial_\nu V}{V^2}=-\frac{H_i}{n-1}\frac{1}{V}.
\]
Thus $V^{-1}$ satisfies
\[
\begin{cases}
\displaystyle
\Delta_g\left(\frac{1}{V}\right)=2\frac{|\nabla^gV|_g^2}{V^3}+\frac{R_g}{n-1}\frac{1}{V}& \text{in }M,\\[1em]
\displaystyle
\partial_\nu\left(\frac{1}{V}\right)=-\frac{H_i}{n-1}\frac{1}{V}& \text{on }\partial_iM.
\end{cases}
\]
Integrating the interior equation over $M$ and applying the divergence theorem yields
\[
2\int_M\frac{|\nabla^gV|_g^2}{V^3}\,d\mu_g+\frac{R_g}{n-1}\int_M\frac{1}{V}\,d\mu_g=\sum_{i=1}^k\int_{\partial_iM}\partial_\nu\left(\frac{1}{V}\right)d\sigma_g.
\]
Using the boundary condition for $V^{-1}$, we obtain
\[
2\int_M\frac{|\nabla^gV|_g^2}{V^3}\,d\mu_g+\frac{R_g}{n-1}\int_M\frac{1}{V}\,d\mu_g=-\frac{1}{n-1}\sum_{i=1}^kH_i\int_{\partial_iM}\frac{1}{V}\,d\sigma_g.
\]
\end{proof}

We establish another identity in the more general setting where $V$ may vanish in the interior of $M$. Although this identity can be proved in several ways, we follow the approach used to prove formula~(1.6) in~\cite{medvedev2024static}.

\begin{proposition}
Let $(M^n,g,V)$, $n\geqslant3$, be a compact connected static manifold with boundary, and suppose that $V$ does not vanish on $\partial M$. Let $\Omega$ be the closure of a connected component of $M\setminus V^{-1}(0)$, and write
\[
\partial\Omega=\left(\bigsqcup_{i=1}^{b}S_i\right)\sqcup\left(\bigsqcup_{\alpha=1}^{r}\Sigma_\alpha\right),
\]
where the $S_i$ are boundary components of $M$ and the $\Sigma_\alpha$ are components of $V^{-1}(0)$. Let $H_i$ denote the mean curvature of $S_i$ with respect to the outward unit normal, and let $\kappa_\alpha=|\nabla^gV|_g|_{\Sigma_\alpha}$, which is a positive constant. Then
\[
\begin{split} 
\int_\Omega&\left[V^3\left|\mathring{\operatorname{Ric}}_g\right|_g^2+\frac{3R_g}{n-1}V|\nabla^gV|_g^2-\frac{R_g^2}{n(n-1)}V^3\right]\,d\mu_g\\ &=\frac{1}{2(n-1)}\sum_{i=1}^{b}H_i\int_{S_i}\left(\operatorname{Ric}_g(\nu,\nu)-\frac{n+1}{(n-1)^2}H_i^2\right)V^3\,d\sigma_g\\ &\quad+\operatorname{sign}(V)\sum_{\alpha=1}^{r}\kappa_\alpha^3\operatorname{Vol}_g(\Sigma_\alpha), 
\end{split}
\]
where $\operatorname{sign}(V)\in\{-1,1\}$ denotes the constant sign of $V$ in the interior of $\Omega$.
\end{proposition}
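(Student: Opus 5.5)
The plan is to obtain the identity by integrating a suitable divergence over $\Omega$, in the spirit of formula (1.6) of~\cite{medvedev2024static}. The basic tool is the vector field $Z=\mathring{\operatorname{Ric}}_g(\nabla^gV,\cdot)^\sharp$. Since $R_g$ is constant, the contracted Bianchi identity gives $\operatorname{div}_g\mathring{\operatorname{Ric}}_g=0$. The static equation can be rewritten as
\[
\operatorname{Hess}_gV=V\mathring{\operatorname{Ric}}_g-\frac{R_g}{n(n-1)}Vg .
\]
Combining these, and using that $\mathring{\operatorname{Ric}}_g$ is trace-free, we get $\operatorname{div}_gZ=V|\mathring{\operatorname{Ric}}_g|_g^2$.

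To produce the weight $V^3$, I will integrate
\[
\operatorname{div}_g(V^2Z)=V^3|\mathring{\operatorname{Ric}}_g|_g^2+2V\mathring{\operatorname{Ric}}_g(\nabla^gV,\nabla^gV).
\]
The second term is handled with the same rewriting of the static equation:
\[
V\mathring{\operatorname{Ric}}_g(\nabla^gV,\nabla^gV)=\tfrac12\langle\nabla^g|\nabla^gV|_g^2,\nabla^gV\rangle+\frac{R_g}{n(n-1)}V|\nabla^gV|_g^2 .
\]
The first summand is converted by $\operatorname{div}_g(|\nabla^gV|^2_g\nabla^gV)$ together with $\Delta_gV=-\frac{R_g}{n-1}V$. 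This produces a further interior term $\frac{R_g}{n-1}V|\nabla^gV|_g^2$ and a boundary flux of $|\nabla^gV|_g^2\nabla^gV$.

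To trade $\int_\Omega V|\nabla^gV|_g^2$ against $\int_\Omega V^3$, I will use the identity
\[
\operatorname{div}_g(V^2\nabla^gV)=2V|\nabla^gV|_g^2-\frac{R_g}{n-1}V^3 .
\]
A linear combination of these three divergences is then chosen so that the interior integrand becomes exactly $V^3|\mathring{\operatorname{Ric}}_g|^2_g+\frac{3R_g}{n-1}V|\nabla^gV|^2_g-\frac{R_g^2}{n(n-1)}V^3$.

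Next come the boundary terms. On a zero-level component $\Sigma_\alpha$, every flux containing a factor $V$ vanishes. The only survivor is the flux of $|\nabla^gV|_g^2\nabla^gV$. Since $\Sigma_\alpha$ is a regular level set with $|\nabla^gV|_g=\kappa_\alpha$ constant (a standard consequence of the static equation), the outward normal of $\Omega$ is $\nu=-\operatorname{sign}(V)\nabla^gV/\kappa_\alpha$. The flux is therefore $\mp\kappa_\alpha^3\operatorname{Vol}_g(\Sigma_\alpha)$. After normalising the overall coefficient, this gives the term $\operatorname{sign}(V)\kappa_\alpha^3\operatorname{Vol}_g(\Sigma_\alpha)$.

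On a boundary component $S_i$, I will use that it is totally umbilical with constant $H_i$ (Section~\ref{sec_prelim}). By Codazzi this gives $\operatorname{Ric}_g(\nu,X)=0$ for tangent $X$. Also $\partial_\nu V=\frac{H_i}{n-1}V$, so $Z\cdot\nu=\frac{H_i}{n-1}V\mathring{\operatorname{Ric}}_g(\nu,\nu)$. Hence the fluxes of $V^2Z$ and $V^2\nabla^gV$ are explicit multiples of $H_iV^3$ and $H_iV^3\operatorname{Ric}_g(\nu,\nu)$.

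The main obstacle is the flux of $|\nabla^gV|^2_g\nabla^gV$ through $S_i$. It contains $|\nabla^\gamma V|^2_\gamma V$, which does not appear in the final formula. I would eliminate it in one of two ways. The first option is to replace that divergence by $\operatorname{div}_g(\operatorname{Hess}_gV(\nabla^gV,\cdot))$, whose boundary flux involves $\operatorname{Hess}_gV(\nabla^gV,\nu)=V\mathring{\operatorname{Ric}}_g(\nabla^gV,\nu)-\frac{R_g}{n(n-1)}V\partial_\nu V$; by the Codazzi observation this is purely normal, so no tangential gradient appears. The second option is to integrate by parts on $S_i$ using the Jacobi identity $\Delta_\gamma V=-(\operatorname{Ric}_g(\nu,\nu)+\frac{H_i^2}{n-1})V$ from the proof of Lemma~\ref{prop:positive-scalar-compactness}. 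That yields $\int_{S_i}V|\nabla^\gamma V|^2_\gamma=\frac12\int_{S_i}(\operatorname{Ric}_g(\nu,\nu)+\frac{H_i^2}{n-1})V^3$, which converts it into a multiple of $H_i^2V^3$ and $\operatorname{Ric}_g(\nu,\nu)V^3$.

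Either way, the final step is bookkeeping. One collects the coefficients of $H_i\operatorname{Ric}_g(\nu,\nu)V^3$ and $H_i^3V^3$, using $\mathring{\operatorname{Ric}}_g(\nu,\nu)=\operatorname{Ric}_g(\nu,\nu)-R_g/n$ and absorbing any $R_gH_iV^3$ terms with the same relations. The check is that the combination reduces to $\frac{H_i}{2(n-1)}\bigl(\operatorname{Ric}_g(\nu,\nu)-\frac{n+1}{(n-1)^2}H_i^2\bigr)V^3$.
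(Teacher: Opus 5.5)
Your argument, using the second option for the flux through $S_i$, is correct and is essentially the paper's proof. The combination you are led to, $V^2Z-|\nabla^gV|_g^2\nabla^gV+\frac{R_g}{n}V^2\nabla^gV$, is exactly the paper's field $X=V^2\operatorname{Ric}_g(\nabla^gV,\cdot)^\sharp-|\nabla^gV|_g^2\nabla^gV$. The paper also disposes of $\int_{S_i}V|\nabla^\gamma V|_\gamma^2$ by the same integration by parts against the tangential trace of the static equation.

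Drop the first option, though; it does not work, for three reasons.
\begin{enumerate}
\item The field $\operatorname{Hess}_gV(\nabla^gV,\cdot)=\frac12\nabla^g|\nabla^gV|_g^2$ is only quadratic in $V$, while every term of the identity is cubic.
\item Its $V$-weighted version equals $V^2Z-\frac{R_g}{n(n-1)}V^2\nabla^gV$. It therefore lies in the span of fields you already have and cannot absorb the term $2V\mathring{\operatorname{Ric}}_g(\nabla^gV,\nabla^gV)$ in $\operatorname{div}_g(V^2Z)$.
\item The term $\operatorname{sign}(V)\kappa_\alpha^3\operatorname{Vol}_g(\Sigma_\alpha)$ comes only from the flux of $|\nabla^gV|_g^2\nabla^gV$, because $\operatorname{Hess}_gV=0$ on $V^{-1}(0)$.
\end{enumerate}
So the field $|\nabla^gV|_g^2\nabla^gV$ cannot be avoided, and its tangential contribution on $S_i$ must be removed exactly as in your second option.
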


\begin{proof}
After replacing $V$ by $-V$ if necessary, assume that $V>0$ in the interior of $\Omega$. 

Set
\[
c=\frac{R_g}{n-1},\qquad q=|\nabla^gV|_g^2,
\]
and consider the smooth vector field
\[
X=V^2\bigl(\operatorname{Ric}_g(\nabla^gV,\cdot)\bigr)^\sharp-q\nabla^gV.
\]
Using the contracted Bianchi identity, we compute
\[
\begin{split} \operatorname{div}_gX={}&2V\operatorname{Ric}_g(\nabla^gV,\nabla^gV)+V^2\langle\operatorname{Ric}_g,\operatorname{Hess}_gV\rangle_g\\ &-2\operatorname{Hess}_gV(\nabla^gV,\nabla^gV)-q\Delta_gV\\ ={}&V^3\left(|\operatorname{Ric}_g|_g^2-cR_g\right)+3cVq\\ ={}&V^3\left|\mathring{\operatorname{Ric}}_g\right|_g^2+\frac{3R_g}{n-1}V|\nabla^gV|_g^2-\frac{R_g^2}{n(n-1)}V^3. \end{split}
\]

Recall that by Proposition 1 in~\cite{cruz2023static}, $\kappa_\alpha=|\nabla^gV|_g|_{\Sigma_\alpha}$ is a positive constant and $\operatorname{Ric}_g(\nu,T)=0$ for every vector field $T$ tangent to $S_i$. Setting $a_i=H_i/(n-1)$, we obtain
\[
\langle X,\nu\rangle_g=a_i\operatorname{Ric}_g(\nu,\nu)V^3-a_iV|\nabla^{S_i}V|_g^2-a_i^3V^3.
\]
The tangential trace of the static equation gives
\[
\Delta_{S_i}V+(n-1)a_i^2V=-V\operatorname{Ric}_g(\nu,\nu).
\]
Multiplying by $V^2$ and integrating by parts on $S_i$, we find
\[
\int_{S_i}\operatorname{Ric}_g(\nu,\nu)V^3\,d\sigma_g=2\int_{S_i}V|\nabla^{S_i}V|_g^2\,d\sigma_g-(n-1)a_i^2\int_{S_i}V^3\,d\sigma_g.
\]
Consequently,
\[
\int_{S_i}\langle X,\nu\rangle_g\,d\sigma_g=\frac{H_i}{2(n-1)}\int_{S_i}\left(\operatorname{Ric}_g(\nu,\nu)-\frac{n+1}{(n-1)^2}H_i^2\right)V^3\,d\sigma_g.
\]

We next compute the flux through $\Sigma_\alpha$. Since $V>0$ on the interior side of $\Omega$ and $V=0$ on $\Sigma_\alpha$, the outward unit normal satisfies
\[
\nu=-\frac{\nabla^gV}{\kappa_\alpha},\qquad \partial_\nu V=-\kappa_\alpha.
\]
Although the first term in $X$ vanishes on $\Sigma_\alpha$, the second term does not:
\[
X=-|\nabla^gV|_g^2\nabla^gV\quad\text{on }\Sigma_\alpha.
\]
Therefore
\[
\langle X,\nu\rangle_g=-|\nabla^gV|_g^2\partial_\nu V=\kappa_\alpha^3,
\]
and hence
\[
\int_{\Sigma_\alpha}\langle X,\nu\rangle_g\,d\sigma_g=\kappa_\alpha^3\operatorname{Vol}_g(\Sigma_\alpha).
\]
Applying the divergence theorem on $\Omega$ and summing these boundary contributions proves the identity.
\end{proof}

\subsection{Static manifolds without boundary}

In this section we prove several results about complete static manifolds without boundary utilizing the technique of associated Einstein manifolds.

The following theorem is proved in \cite[Section 4.2]{reiris2015static} by using a different tool.

\begin{theorem}\label{thm:positive-scalar-curvature}
Let $(M^n,g,V)$, $n\geqslant 3$, be a connected complete static manifold without boundary. Suppose that the static potential $V$ does not vanish on $M$. Then $R_g\leqslant 0$.
\end{theorem}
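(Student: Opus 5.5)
We argue by contradiction through the associated Einstein manifold, following the pattern of the proof of Lemma~\ref{prop:positive-scalar-compactness}. Suppose $R_g>0$. Replacing $V$ by $-V$ if necessary, we may assume $V>0$ on $M$.

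The first step is to pass to the associated Einstein manifold of Subsection~\ref{AEM}. Since $V$ never vanishes, there is no collapsing of circle fibres, and $(N,h)=(M\times S^1,\,g+V^2d\theta^2)$ is a smooth Einstein manifold without boundary, with
\[
\operatorname{Ric}_h=\frac{R_g}{n-1}h=n\kappa\, h,\qquad \kappa=\frac{R_g}{n(n-1)}>0 .
\]

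The second step is to obtain completeness of $(N,h)$. The natural route is to note that the projection $(N,h)\to(M,g)$ is a Riemannian submersion whose fibres are compact circles. A Cauchy sequence in $N$ then projects to a Cauchy sequence in $M$, since the projection is distance nonincreasing. Because $M$ is complete, the projected sequence converges, so the original sequence eventually lies in the preimage of a compact set. That preimage is compact, so the sequence has a convergent subsequence and therefore converges. Hence $(N,h)$ is complete.

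The third step applies Myers' theorem to the complete manifold $(N,h)$ with $\operatorname{Ric}_h\geqslant n\kappa h$ and $\kappa>0$. This gives that $N$ is compact with finite fundamental group. It follows that $M$, the image of $N$ under the projection, is compact.

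The fourth step derives the contradiction on the closed manifold $M$. Integrating \eqref{traced} over $M$ gives
\[
0=\int_M\Delta_gV\,d\mu_g=-\frac{R_g}{n-1}\int_MV\,d\mu_g<0,
\]
which is impossible. An alternative ending would use $\pi_1(N)=\pi_1(M)\times\mathbb Z$, which is infinite and so contradicts the finiteness of $\pi_1(N)$ from Myers' theorem. Either way we conclude $R_g\leqslant0$.

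The main point requiring care is the completeness in the second step; once it is established, the rest is routine.
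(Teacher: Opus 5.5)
Your proof is correct and follows the same route as the paper. You pass to the associated Einstein manifold $(M\times S^1,\,g+V^2d\theta^2)$, use Myers' theorem to get compactness of $N$ and hence of $M$, and reach a contradiction on the closed manifold $M$. The only differences are minor: you prove completeness of $(N,h)$ explicitly where the paper just asserts it, and you finish by integrating \eqref{traced} (or via $\pi_1(N)\supset\mathbb Z$) instead of arguing that a sign-definite eigenfunction must be a first eigenfunction and hence constant.
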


\begin{proof}
Suppose, by contradiction, that $R_g>0$. Consider the associated Einstein manifold $(N=M\times S^1,h)$. It is clear that $(N,h)$ is complete and $\operatorname{Ric}_h=\dfrac{R_g}{n-1}h>0$. The Bonnet--Myers theorem therefore implies that $(N,h)$ is compact.

The canonical projection
\[
    \pi\colon N=M\times S^1\longrightarrow M
\]
is continuous and surjective. Hence $M=\pi(N)$ is compact. Since $(M,g)$ is complete and has no boundary, it is closed.

By~\eqref{traced}, $V$ is an eigenfunction of the Laplacian with eigenvalue $-\dfrac{R_g}{n-1}<0$. Since $V$ does not change sign, by the characterization of the first eigenfunction of the Laplacian on a closed connected manifold, $V$ must therefore be a first eigenfunction. Consequently, $V$ is constant and $\Delta_gV=0$. We arrive at a contradiction. Therefore $R_g\leqslant 0$.
\end{proof}

A related classical rigidity result is due to Lichnerowicz~\cite[p.~137]{Lichnerowicz1955}: a complete three-dimensional static manifold without boundary, admitting a static potential $V$ satisfying $V\to1$ at infinity, must be flat with $V\equiv1$. Anderson~\cite[Theorem~3.2]{Anderson1999} later strengthened this result by removing the asymptotic assumption on $V$.

We next consider the case $R_g<0$.

\begin{theorem}
Let $(M^n,g,V)$, $n\geqslant 3$, be a connected complete static manifold without boundary. Suppose that $R_g<0$. Then $(M,g)$ is not compact. Moreover, if $V>0$ on $M$, then it is unbounded above. 
\end{theorem}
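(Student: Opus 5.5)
The statement has two parts: $(M,g)$ is not compact, and a positive static potential is unbounded above. Both rest on the traced equation~\eqref{traced}, which on a manifold without boundary reads $\Delta_gV=-\frac{R_g}{n-1}V$ with $-\frac{R_g}{n-1}>0$. So $V$ is an eigenfunction of $-\Delta_g$ with the negative eigenvalue $\frac{R_g}{n-1}$.

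\emph{Compactness.} Suppose $(M,g)$ is compact. Since it is complete without boundary, it is closed. Multiply~\eqref{traced} by $V$ and integrate by parts to get
\[
\int_M|\nabla^gV|_g^2\,d\mu_g=\frac{R_g}{n-1}\int_MV^2\,d\mu_g.
\]
The left side is nonnegative and the right side is strictly negative unless $V\equiv0$. This contradicts the nontriviality of $V$, so $M$ is noncompact. Note that no sign assumption on $V$ is needed for this step.

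\emph{Unboundedness.} Now let $V>0$ and suppose, for contradiction, that $V\leqslant C$. My first choice is the Omori--Yau maximum principle for the Laplacian. It requires a Ricci lower bound, which I would obtain from the static equation: $\operatorname{Ric}_g=V^{-1}\operatorname{Hess}_gV+\frac{R_g}{n-1}g$. This gives a bound only if $V^{-1}\operatorname{Hess}_gV$ is controlled, and $V$ may decay, so this route is delicate.

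I would therefore pass to the associated Einstein manifold $(N=M\times S^1,h)$, with $h=g+V^2d\theta^2$. It is complete because $V>0$ is smooth and the $S^1$ fibres are compact; completeness of $h$ follows from that of $g$, since $h\geqslant g$ on horizontal directions and projections of curves are no longer. Moreover $\operatorname{Ric}_h=\frac{R_g}{n-1}h$, so the Ricci curvature of $h$ is bounded below by a constant. Extend $V$ to $N$ as the $\theta$-independent function $\tilde V$. Its Laplacian is
\[
\Delta_h\tilde V=\Delta_gV+V^{-1}|\nabla^gV|_g^2=-\frac{R_g}{n-1}V+\frac{|\nabla^gV|_g^2}{V}\geqslant-\frac{R_g}{n-1}\tilde V>0.
\]
If $\tilde V$ is bounded above, Omori--Yau on $(N,h)$ produces points $p_k$ with $\tilde V(p_k)\to\sup\tilde V$ and $\Delta_h\tilde V(p_k)\leqslant 1/k$. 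This would give
\[
-\frac{R_g}{n-1}\sup V\leqslant 0,
\]
contradicting $\sup V>0$ and $R_g<0$. Hence $V$ is unbounded above.

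The main obstacle is justifying the Omori--Yau principle on $(N,h)$, namely completeness of $h$ together with the Ricci lower bound; both look fine. As a fallback I would use the weak (stochastic) maximum principle of Pigola--Rigoli--Setti, which holds under these same conditions.
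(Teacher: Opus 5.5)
Your proposal is correct and follows the paper's route: the compactness part uses the same sign argument for a Laplace eigenfunction on a closed manifold, and the unboundedness part passes to the complete Einstein manifold $(N,h)$, which has a Ricci lower bound, and applies Omori--Yau there. The only cosmetic difference is the test function: the paper uses $\log V$, for which $\Delta_h\log V\equiv-\frac{R_g}{n-1}$ exactly, whereas you use $\tilde V$ with $\Delta_h\tilde V\geqslant-\frac{R_g}{n-1}\tilde V$, which then requires $\sup V>0$, and this holds because $V>0$.
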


\begin{proof}
Suppose, by contradiction, that $(M,g)$ is compact. Since $(M,g)$ is complete and has no boundary, it is closed. But then $V$ is a Laplace eigenfunction with \textit{positive} eigenvalue $-\dfrac{R_g}{n-1}$, which is impossible.

Now suppose that $V>0$. Set $\lambda:=-\dfrac{R_g}{n-1}>0$. Consider the associated Einstein manifold $(N,h)$. For the function $u:=\log V$, regarded as a function on $N$, one has
\[
\Delta_hu=\Delta_gu+\left\langle\nabla^g\log V,\nabla^gu\right\rangle_g=\frac{\Delta_gV}{V}=\lambda>0.
\]
If $u$ were bounded above, the Omori--Yau maximum principle~\cite{omori1967isometric,yau1975harmonic} on the complete manifold $(N,h)$ would provide a sequence of points $p_j\in N$ such that
\[
u(p_j)\longrightarrow\sup_Nu,\qquad \Delta_hu(p_j)\leqslant\frac1j.
\]
This contradicts $\Delta_hu\equiv\lambda>0$. Hence $u=\log V$ is unbounded above, and therefore $\sup_MV=+\infty$.
\end{proof}

\begin{example}
Let \((N^{n-1},h)\) be a complete Einstein manifold satisfying $\operatorname{Ric}_h=-a^2h$ for some constant \(a>0\). Consider the product manifold $M^n=\mathbb{R}\times N$ equipped with the metric $g=dt^2+h$ and define $V(t,x)=e^{at}$. Since both factors are complete, \((M,g)\) is complete. The Ricci tensor of the product metric is
\[
\operatorname{Ric}_g=0\cdot dt^2+\operatorname{Ric}_h=-a^2h.
\]
In particular, its scalar curvature is $R_g=-(n-1)a^2$.

Since \(V\) depends only on the \(\mathbb{R}\)-variable,
\[
\operatorname{Hess}_gV=a^2V\,dt^2 \qquad \text{and} \qquad \Delta_gV=a^2V.
\]
It follows that
\begin{align*}
(\Delta_gV)g+V\operatorname{Ric}_g=a^2V\bigl(dt^2+h\bigr)-a^2Vh=a^2V\,dt^2=\operatorname{Hess}_gV.
\end{align*}
Therefore, \((M,g,V)\) is a complete static manifold with everywhere positive static potential $V=e^{at}>0$.

In general, \(g\) does not have constant sectional curvature. For example, every mixed two-plane spanned by \(\partial_t\) and a vector tangent to \(N\) has sectional curvature zero. Thus, when \(a>0\), \((M,g)\) cannot be hyperbolic space, whose sectional curvature is strictly negative on every two-plane.

Moreover, by choosing \(N\) to be an Einstein manifold that does not have constant sectional curvature, one obtains further complete static manifolds with nowhere-vanishing static potential that are neither Euclidean nor hyperbolic.
\end{example}

\begin{proposition}\label{prop:conformally-flat-associated-metric}
Let $(M^n,g,V)$, $n\geqslant3$, be a static manifold, and let $(N^{n+1},h)$ be its associated Einstein manifold. If $h$ is locally conformally flat on its regular part, then $g$ has constant sectional curvature. More precisely, $\sec_g=\dfrac{R_g}{n(n-1)}$.
\end{proposition}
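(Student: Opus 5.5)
We work on the regular part of $N$, where $h=g+V^2d\theta^2$ on $\Omega\times S^1$, and compute the full curvature tensor of this warped product with one-dimensional fibre. For $X,Y,Z,W$ tangent to $M$ and $T=\partial_\theta$ the standard O'Neill formulas give
\[
\operatorname{Rm}_h(X,Y,Z,W)=\operatorname{Rm}_g(X,Y,Z,W),\qquad \operatorname{Rm}_h(X,T,T,Y)=-V\,\operatorname{Hess}_gV(X,Y),
\]
with the sign fixed so that sectional curvatures satisfy $K_h(X\wedge T)=-\operatorname{Hess}_gV(X,X)/V$ for unit $X$. All mixed components with exactly one or three $T$'s vanish.

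The plan is to write down the Weyl tensor of $h$ using the Einstein condition $\operatorname{Ric}_h=\frac{R_g}{n-1}h=:n\lambda h$. Since $h$ is Einstein, its Weyl tensor is $W_h=\operatorname{Rm}_h-\frac{\lambda}{2}\,h\owedge h$, where $\lambda=\frac{R_g}{n(n-1)}$ (the scalar curvature of $h$ is $(n+1)n\lambda$). Local conformal flatness gives $W_h=0$, since $\dim N=n+1\geqslant4$. Hence $h$ has constant sectional curvature $\lambda$ on its regular part.

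We then restrict this to $2$-planes tangent to $M$. By the first formula, $\operatorname{Rm}_g(X,Y,Z,W)=\operatorname{Rm}_h(X,Y,Z,W)=\lambda\bigl(g(X,W)g(Y,Z)-g(X,Z)g(Y,W)\bigr)$. Thus $\sec_g\equiv\lambda=\frac{R_g}{n(n-1)}$ on $M\setminus V^{-1}(0)$. As a consistency check, the mixed components give $\operatorname{Hess}_gV=-\lambda Vg$, which agrees with the static equation once $\operatorname{Ric}_g=(n-1)\lambda g$.

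Finally we extend across the zero set. $V$ is a nontrivial solution of $\Delta_gV=-\frac{R_g}{n-1}V$, so by unique continuation $V^{-1}(0)$ has empty interior (it is in fact a regular hypersurface). The tensor $\operatorname{Rm}_g-\frac{\lambda}{2}g\owedge g$ is continuous and vanishes on a dense set, hence vanishes everywhere.

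The main obstacle is only bookkeeping: the sign conventions in the warped-product curvature formula, and the identification of the constant via the Einstein constant. Dimension is not an issue, since $\dim N\geqslant4$ so that Weyl-flat is equivalent to conformally flat. Should one worry about the normalisation of $d\theta$ or conical singularities, note that these only affect points over $V^{-1}(0)$, which are excluded from the regular part anyway.
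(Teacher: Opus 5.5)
Your proposal is correct and follows essentially the same route as the paper. On the regular part, local conformal flatness in dimension $n+1\geqslant4$ kills the Weyl tensor, so the Einstein metric $h$ has constant sectional curvature $\frac{R_g}{n(n-1)}$. The horizontal curvature of the warped product with one-dimensional fibre coincides with that of $g$, and continuity extends the conclusion across $V^{-1}(0)$. Your mixed-component consistency check and your unique-continuation argument that $\{V\neq0\}$ is dense are useful additions that the paper leaves implicit.
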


\begin{proof}
By the construction of the associated Einstein manifold, on its regular part one has $h=g+V^2d\theta^2$ and $\operatorname{Ric}_h=\dfrac{R_g}{n-1}h$.

Since $\dim N=n+1\geqslant4$ and $h$ is locally conformally flat, its Weyl tensor $W_h$ vanishes. The Riemann tensor decomposition implies, that an Einstein metric with vanishing Weyl tensor has constant sectional
curvature. 

It remains to relate the sectional curvatures of $h$ and $g$. Let $X,Y\in T_xM$ be $g$-orthonormal vectors. Regarding $X$ and $Y$ as horizontal vectors tangent to $N$, they are also $h$-orthonormal. Since $h=g+V^2d\theta^2$ is a warped product with one-dimensional fiber, its curvature tensor on horizontal vectors agrees with that of the base:
\[
    \operatorname{Rm}_h(X,Y,Y,X)=\operatorname{Rm}_g(X,Y,Y,X).
\]
Therefore
\[
    \sec_g(X\wedge Y)=\sec_h(X\wedge Y)=\frac{R_g}{n(n-1)}.
\]
Since $x$ and the two-plane $X\wedge Y\subset T_xM$ were arbitrary, $g$ has constant sectional curvature $\sec_g=\dfrac{R_g}{n(n-1)}$.

If the associated Einstein manifold is singular along the fixed-point set $\{V=0\}$, the preceding argument applies on the regular part $\{V\neq0\}$. The conclusion then extends to the whole of $M$ by continuity of the curvature tensor.
\end{proof}

\bibliography{mybib}
\bibliographystyle{alpha}

\end{document}